\theoremstyle{plain} 
\newtheorem{Lemma}{Lemma}[section] \newtheorem{Thm}[Lemma]{Theorem} \newtheorem{Prop}[Lemma]{Proposition}\newtheorem{Cor}[Lemma]{Corollary}  
\theoremstyle{definition} \newtheorem{Defn}[Lemma]{Definition} \newtheorem{Ex}[Lemma]{Example} \newtheorem{Alg}[Lemma]{Algorithm}
\theoremstyle{remark} \newtheorem{Rem}[Lemma]{Remark}  
\numberwithin{equation}{section}
\newcommand{\CC}{\mathbb{C}}
\newcommand{\RR}{\mathbb{R}}
\newcommand{\OO}{\mathcal{O}}
\newcommand{\dd}{\partial}
\newcommand{\inj}{\hookrightarrow}
\newcommand{\surj}{\twoheadrightarrow}
\DeclareMathOperator{\cok}{cok}
\DeclareMathOperator{\codim}{codim}
\DeclareMathOperator{\vol}{vol}
\DeclareMathOperator{\supp}{supp}
\newcommand{\sse}{\subset}
\newcommand{\ten}{\otimes}
\begin{document}

\author{Ketil Tveiten}
\title{B-Splines, Polytopes and Their Characteristic D-Modules}
\address{Ketil Tveiten\\Matematiska Institutionen\\ Stockholms Universitet\\ 106 91 Stockholm.}
\email{ktveiten@math.su.se}
\subjclass[2000]{14F10, 32C38, 52B11}
\keywords{$D$-modules, $B$-splines}
\begin{abstract}
  Given a polytope $\sigma\sse\RR^m$, its characteristic distribution $\delta_\sigma$ generates a $D$-module which we call the \emph{characteristic $D$-module of $\sigma$} and denote by $M_\sigma$. More generally, the characteristic distributions of a cell complex $K$ with polyhedral cells generate a $D$-module $M_K$, which we call the characteristic $D$-module of the cell complex. We prove various basic properties of $M_K$, and show that under mild topological conditions on $K$, the $D$-module theoretic direct image of $M_K$ coincides with the module generated by the $B$-splines associated to the cells of $K$ (considered as distributions). We also give techniques for computing $D$-annihilator ideals of polytopes.
\end{abstract}
\maketitle

\section{Introduction}

This paper concerns the $D$-module generated by the characteristic function of a polytope (or polyhedral cell complex) in $\RR^m$; we will call it the \emph{characteristic $D$-module} of the polytope. To consider the $D$-module generated by a function is a very natural construction, and in contrast to what is generally the case, the geometric content is very explicit here, encoded via the submodules generated by the characteristic functions of the faces of the polytope.

De Concini and Procesi in \cite{DeConcini-Procesi} give an extensive treatment of certain $B$-splines (in particular, those arising from the projection of a box or a coordinate orthant, these being most useful for applications), using tools from $D$-module theory and combinatorics. A $B$-spline is a function given by integrating over the fibers of a projection, which is precisely the kind of construction the $D$-module theoretic direct image is intended to capture (see e.g. \cite{Pham}). 
We can observe that the $D$-module generated by a $B$-spline should correspond in this sense to the direct image of the characteristic $D$-module of a suitable polytope. The aim of this paper is primarily to make this correspondence explicit, and give criteria for when it holds precisely; and so to provide a description of a class of $D$-module direct images, examples of which are in short supply in the literature. 

Sections \ref{sec:prelims} and \ref{sec:ann-id} describe the characteristic modules; these are constructed from linear semialgebraic sets in a very natural way. Section \ref{sec:ann-id} in particular answers another interesting question: what differential equations do the characteristic functions satisfy, equivalently what is their $D$-annihilator ideal? This seems to have been an open problem for arbitrary polytopes, and a complete (though not efficient) solution is given here. Section \ref{sec:dir-im} describes the direct images of the characteristic modules, including their higher direct images, and (Theorem \ref{Thm:mainThm}) their connection to the $B$-spline module of De Concini and Procesi.

We will fix the following notation: $X,Y$ and $Z$ denote $\CC^m,\CC^s$ and $\CC^{m-s}$, respectively. For any affine space $H$, $\CC[H]$ denotes the ring of polynomials on $H$, $D_H$ the ring of polynomial-coefficient differential operators on $H$; $D_H$ is called the \emph{Weyl algebra} (in $\dim(H)$ variables), and is isomorphic to the $\CC$-algebra generated by variables $x_1,\ldots,x_{\dim(H)},\dd_1,\ldots,\dd_{\dim(H)}$ subject to the relations $[\dd_i,x_i]=1$ and all other elements commute. We give $\RR^m$ its standard Euclidean structure, with induced Lebesgue measure $dx$ on $\RR^m$ and all its subspaces; and similarly for $X$. We denote the standard basis of $\RR^m$ by $e_1,\ldots,e_m$.


\section{Polyhedral cell complexes and the characteristic module}\label{sec:prelims}

\begin{Defn}
We will by \emph{polytope} or \emph{polyhedral cell} mean a closed simply-connected semialgebraic set in $\RR^m$ defined by linear polynomials, with dimension equal to the dimension of its affine hull. We do not make any requirements on convexity or compactness, but the requirement of closedness is essential. A \emph{face} of a polytope $\sigma$ is a polytope contained in $\sigma$, defined by the same polynomials as $\sigma$, with some inequalities replaced by equalities; in particular $\sigma$ is a face of itself, each component of the boundary of $\sigma$ is a face of $\sigma$, and the empty set is a face of every polytope. 
A \emph{facet} is a face of codimension one. A \emph{vertex} is a face of dimension zero.

By \emph{polyhedral cell complex} we mean a union of polyhedral cells subject to the requirement that the intersection of any two cells is a face of both (this is like a simplicial complex, but we allow more general cells). In particular, the cell complex consisting of a single polytope $\sigma$ and all its faces, is denoted $\widehat{\sigma}$.

The affine hull of a polytope $\sigma$ is denoted by $H_\sigma$.
\end{Defn}

\begin{Defn}
  The \emph{characteristic distribution} $\delta_\sigma$ of a polytope $\sigma$ is defined by
$$\delta_\sigma(\phi)= \int_\sigma \phi\; dx$$
for a test function $\phi$, where $\int_\sigma$ is the $\dim(\sigma)$-dimensional integral taken with respect to the appropriate restriction to $H_\sigma$ of the standard measure. The Weyl algebra acts on $\delta_\sigma$ by 
$$(p(x)\dd^\alpha\cdot\delta_\sigma)(\phi)=\int_\sigma (-1)^{|\alpha|}\dd^\alpha(p(x)\phi(x))\;dx$$
\end{Defn}

If we denote the facets of a cell $\sigma$ by $\sigma_i$, $i = 1,\ldots, r$, and let their outward unit normal vectors (relative to $H_\sigma$) be denoted $n_i$, we have the following relations, which we call the \emph{standard relations}.

\begin{Prop}[Standard relations]\label{Prop:relations}
\begin{itemize}
\item[(i)]If $\dim(\sigma)>0$, then for any directional derivative $\dd_v$ where $v$ is a vector tangent to $H_\sigma$, we have
$$\dd_v\cdot\delta_\sigma=-\sum_i\langle v|n_i\rangle\delta_{\sigma_i}.$$
\item[(ii)]Let $I(\sigma)$ denote the defining ideal of $H_\sigma$. For any $p\in I(\sigma)$, we have $$p\cdot\delta_\sigma=0.$$ As $\sigma$ is a polyhedral body, $H_\sigma$ is an affine space defined by $m-dim(\sigma)$ equations of degree 1, and the corresponding polynomials generate $I(\sigma)$.
\end{itemize}
\end{Prop}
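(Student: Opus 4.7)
The plan is to prove (ii) by direct computation using the definition, and (i) by applying the divergence theorem inside the affine hull $H_\sigma$.

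For (ii), the argument is essentially trivial. If $p\in I(\sigma)$, then $p$ vanishes on $H_\sigma$ and hence on $\sigma\sse H_\sigma$. By definition
\[ (p\cdot\delta_\sigma)(\phi)=\int_\sigma p(x)\phi(x)\,dx = 0, \]
since the integrand vanishes identically on the region of integration. The accompanying claim that the $m-\dim(\sigma)$ linear equations defining $H_\sigma$ generate $I(\sigma)$ is a standard fact from the theory of affine subspaces (equivalently, polynomials vanishing on a linear subvariety are generated by any set of linear forms cutting it out).

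For (i), the idea is to recognize $-\int_\sigma \dd_v\phi\,dx$ as the boundary integral produced by the divergence theorem. Since $v$ is tangent to $H_\sigma$, the operator $\dd_v$ restricts to a constant-coefficient derivation intrinsic to $H_\sigma$, and the vector field $X=\phi\, v$ on $H_\sigma$ satisfies $\operatorname{div}(X)=\dd_v\phi$ (because $v$ is constant). Applying the divergence theorem on the region $\sigma\sse H_\sigma$ gives
\[ \int_\sigma \dd_v\phi\,dx = \int_{\dd\sigma} \phi\,\langle v|n\rangle\, dS, \]
where $n$ is the outward unit normal along $\dd\sigma$ relative to $H_\sigma$. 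The boundary $\dd\sigma$ is the union of the facets $\sigma_i$, and on each facet $n$ is the constant vector $n_i$; breaking up the integral accordingly yields
\[ \int_\sigma \dd_v\phi\,dx = \sum_i \langle v|n_i\rangle \int_{\sigma_i}\phi\, dS = \sum_i \langle v|n_i\rangle\,\delta_{\sigma_i}(\phi). \]
Combining with the sign coming from integration by parts in the definition of $\dd_v\cdot\delta_\sigma$ then gives the formula.

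The only genuine subtlety is justifying the divergence theorem in this setting: $\sigma$ may be non-convex and unbounded, and its boundary is merely piecewise linear. Unboundedness is harmless because $\phi$ is a test function (so $\phi v$ has compact support on $H_\sigma$); piecewise smoothness of $\dd\sigma$ is within the standard hypotheses of the divergence theorem for sets of finite perimeter, and the facets meet in codimension-two strata that contribute nothing to the boundary integral. Once these technical points are dispensed with, the computation above is the whole content of the proposition.
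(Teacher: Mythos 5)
Your proposal is correct and is essentially the paper's own argument: the paper disposes of (i) by citing Stokes' theorem and calls (ii) clear, and your write-up simply supplies the details (the divergence theorem on $H_\sigma$ applied to the compactly supported field $\phi\,v$, with the sign matching the Weyl algebra action, and the vanishing of $p$ on $\sigma\sse H_\sigma$ for (ii)). The technical caveats you raise about unboundedness and piecewise-linear boundaries are handled appropriately.
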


\begin{proof}
  $(i)$ is Stokes' theorem, and $(ii)$ is clear.
\end{proof}

The set of standard relations for a polytope $\sigma\sse\RR^m$ is spanned by $\dim(\sigma)$ relations of type \ref{Prop:relations}$(ii)$ and $\codim(\sigma)$ relations of type \ref{Prop:relations}$(i)$; this is because $H_\sigma$ is defined by $\codim(\sigma)$ equations.

\begin{Ex}\label{Ex:unit-interval}
  Let $I$ be the unit interval $[0,1]$ in $\RR^1$, with coordinate $x$. Then $\dd_x\cdot\delta_I = \delta_0 - \delta_1$, $x\cdot\delta_0=0$, and $(x-1)\cdot\delta_1=0$. It follows that $\delta_I$ is annihilated by the operator $x(x-1)\dd_x$.
\end{Ex}

\begin{Defn}
  For a polyhedral cell complex $K=\bigcup \sigma$, we define the \emph{characteristic module} of $K$ to be the $D_X$-module
$$M_K:=D_X\cdot\{\delta_\sigma|\sigma\subset K\},$$
 generated by the characteristic distributions of all the cells of $K$.
\end{Defn}

\begin{Rem}\label{Rem:support}
  We note that the support (in $\CC^m$) of a generator $\delta_\sigma$ of $M_K$ is equal to the affine closure $H_\sigma$. This is different from the support of $\delta_\sigma$ considered as a distribution (on $\RR^m$), and is due to the fact that the module is defined by the differential equations the distribution $\delta_\sigma$ satisfies, which do not uniquely determine $\delta_\sigma$; there are other distributional solutions, but they are all supported on $H_\sigma$. In the remainder, the \emph{support of $\delta_\sigma$} will always mean the support of the generator $\delta_\sigma$.
\end{Rem}


\subsection{The skeleton filtration}\label{subsec:skeleton-filtration}

This module has a natural filtration by the dimension of the support of the generators:

\begin{Defn}[The skeleton filtration]\label{Defn:skel-filt} Let $F^iM_K:=D_X\cdot\{\delta_\sigma|\sigma\subset K, \dim(\sigma)\le i\}$, this is a submodule of $M_K$. These submodules form a filtration $$F^0M_K\sse F^1M_K \sse \cdots \sse F^{m-1}M_K\sse F^mM_K=M_K$$ which we call the \emph{skeleton filtration}.
\end{Defn}

\begin{Prop}\label{Prop:filtr-quots}
Let $i:H_\sigma\inj X$ be the inclusion map. The filtration quotients $Q_k:=F^kM_K/F^{k-1}M_K$ are semisimple, with summands isomorphic to the direct image under the inclusion $i_+^0\CC[H_\sigma]$, one for each $k$-cell $\sigma\sse K$. 
\end{Prop}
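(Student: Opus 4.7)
The plan is to realize each cyclic $D_X$-submodule $D_X\cdot[\delta_\sigma]$ of $Q_k$ generated by the image of a $k$-cell $\sigma$ as an isomorphic copy of $i_+^0\CC[H_\sigma]$, and then to show $Q_k$ is the direct sum of these over all $k$-cells $\sigma\sse K$.

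First, I would identify the relations satisfied by $[\delta_\sigma]$ in $Q_k$. By Proposition \ref{Prop:relations}(i), any tangential derivative $\partial_v\cdot\delta_\sigma$ (with $v$ tangent to $H_\sigma$) is a combination of the $\delta_{\sigma_i}$ for $(k-1)$-dimensional facets $\sigma_i$, so it lies in $F^{k-1}M_K$; hence $\partial_v\cdot[\delta_\sigma]=0$ in $Q_k$. Together with (ii), the left ideal $J_\sigma:=D_X\cdot I(H_\sigma)+D_X\cdot\{\partial_v:v\text{ tangent to }H_\sigma\}$ annihilates $[\delta_\sigma]$. In linear coordinates where $H_\sigma=\{y_1=\cdots=y_{m-k}=0\}$ one checks $D_X/J_\sigma\cong i_+^0\CC[H_\sigma]$ via the usual presentation of the direct image of the structure sheaf of an affine subspace, so $D_X\cdot[\delta_\sigma]$ is a quotient of $i_+^0\CC[H_\sigma]$.

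Second, since $i_+^0\CC[H_\sigma]$ is a simple $D_X$-module (a standard fact for closed embeddings of affine subspaces), $D_X\cdot[\delta_\sigma]$ is either zero or isomorphic to $i_+^0\CC[H_\sigma]$. To rule out zero I use the underlying distributional interpretation: $\delta_\sigma$ has distributional support equal to $\sigma$ (of dimension $k$), while every element of $F^{k-1}M_K$ is a $D_X$-combination of $\delta_\tau$ for $\dim\tau\le k-1$ and, since distributional support does not grow under differential operators, has support contained in the $(k-1)$-skeleton $K^{(k-1)}$. Hence $\delta_\sigma\notin F^{k-1}M_K$ and $D_X\cdot[\delta_\sigma]\cong i_+^0\CC[H_\sigma]$.

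Third, $Q_k$ is generated as a $D_X$-module by these $[\delta_\sigma]$ and each cyclic submodule is simple, so $Q_k$ is a sum of simples and thus semisimple; it remains to show this sum is indexed precisely by the $k$-cells, i.e.\ that $[\delta_{\sigma_0}]\notin\sum_{\sigma\ne\sigma_0}D_X\cdot[\delta_\sigma]$ for every $k$-cell $\sigma_0$. Suppose otherwise; then $T:=\delta_{\sigma_0}-\sum_{\sigma\ne\sigma_0}D_\sigma\delta_\sigma$ lies in $F^{k-1}M_K$ and hence has distributional support in $K^{(k-1)}$. Pick $p\in\mathrm{int}(\sigma_0)$; the cell-complex axiom (intersections are faces) forces $p$ to lie in no other cell of $K$, so $p\notin K^{(k-1)}$. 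On a small neighborhood $U$ of $p$ disjoint from $K^{(k-1)}$ and from every $\sigma\ne\sigma_0$, the distribution $T$ vanishes and each $D_\sigma\delta_\sigma$ vanishes, forcing $\delta_{\sigma_0}|_U=0$; but $\delta_{\sigma_0}|_U$ is the non-zero Lebesgue measure on $H_{\sigma_0}\cap U$, a contradiction. The main obstacle is this direct-sum step: one must bridge the abstract $D_X$-module structure and the underlying distributions, and the case of distinct $k$-cells sharing the same affine hull (where non-isomorphism of simples does not rule out coincidence as submodules) is precisely where the distributional-support argument is essential.
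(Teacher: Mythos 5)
Your proposal is correct and follows essentially the same route as the paper: read off the annihilating relations to get a surjection from $i_+^0\CC[H_\sigma]$, invoke simplicity (Kashiwara) to upgrade it to an isomorphism once non-vanishing is known, and settle both non-vanishing and directness by support considerations on the underlying distributions. The only difference is one of detail — the paper dismisses directness with ``follows easily,'' whereas you spell out the localization-at-an-interior-point argument, which correctly handles the case of distinct $k$-cells with the same affine hull.
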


\begin{Rem}See \cite{Malgrange},\cite[V]{Borel} for definitions of the direct image functor $i_+$; this is a functor between the corresponding derived categories, and we will denote by $i_+^0$ the restriction to the zeroth cohomology object. When $i$ is a closed embedding as here, these are equivalent, and we also have the celebrated theorem of Kashiwara (\cite{Kashiwara-thesis}), which we will use several times in the rest of the paper.
  \begin{Thm}\label{Thm:Kashiwara}
    Let $i:V\inj W$ be a closed immersion of schemes. Then the functor $i_+$ is an equivalence between the category of coherent $D_V$-modules and the category of coherent $D_W$-modules with support on $V$.
  \end{Thm}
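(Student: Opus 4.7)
The plan is to construct, for each $k$-cell $\sigma$ of $K$, a $D_X$-linear map $i_+^0\CC[H_\sigma]\to Q_k$ sending the canonical generator to $\overline{\delta_\sigma}$, show each such map is an isomorphism onto a simple submodule, and then close the argument with an injectivity check for the assembled map $\bigoplus_\sigma i_+^0\CC[H_\sigma]\to Q_k$ via distribution-theoretic support considerations on $\RR^m$.

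For the map on a single summand, the standard relations of Proposition \ref{Prop:relations} do the work: part (ii) gives $I(H_\sigma)\cdot\overline{\delta_\sigma}=0$ in $Q_k$; part (i) says that for any vector $v$ tangent to $H_\sigma$, $\partial_v\cdot\delta_\sigma$ is a combination of $\delta_{\sigma_i}$ over facets $\sigma_i$ of $\sigma$, each of dimension $k-1$, hence $\partial_v\cdot\overline{\delta_\sigma}=0$ in $Q_k$. In local coordinates with $H_\sigma=\{x_{k+1}=\cdots=x_m=0\}$ these are exactly the defining relations for a generator of $i_+^0\CC[H_\sigma]\cong D_X/D_X\cdot(x_{k+1},\ldots,x_m,\partial_1,\ldots,\partial_k)$, so the desired surjection $i_+^0\CC[H_\sigma]\surj \langle\overline{\delta_\sigma}\rangle\sse Q_k$ exists. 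Kashiwara's Theorem \ref{Thm:Kashiwara} applied to the simple trivial $D_{H_\sigma}$-module $\CC[H_\sigma]$ shows $i_+^0\CC[H_\sigma]$ is simple, so this surjection is an isomorphism as soon as it is nonzero. Nonvanishing holds because $\delta_\sigma$ has positive $k$-dimensional mass on the relative interior $\sigma^\circ$, whereas every element of $F^{k-1}M_K$, viewed as a distribution on $\RR^m$, is supported on the $(k-1)$-skeleton of $K$. Summing over all $k$-cells gives a surjection $\bigoplus_\sigma i_+^0\CC[H_\sigma]\surj Q_k$.

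For injectivity, assume a relation $\sum_\sigma p_\sigma\delta_\sigma\in F^{k-1}M_K$ with $p_\sigma\in D_X$ and $\sigma$ ranging over the $k$-cells. The decisive combinatorial input is that the relative interior $\tau^\circ$ of any fixed $k$-cell $\tau$ is disjoint from every other cell of $K$: any other $k$-cell meets $\tau$ in a common face of dimension $<k$, which lies on $\partial\tau$, and any lower-dimensional cell contained in $\tau$ is itself a face of $\tau$, hence on $\partial\tau$. Restricting the hypothesized relation to $\tau^\circ$ therefore kills $p_\sigma\delta_\sigma$ for all $\sigma\neq\tau$ and kills the $F^{k-1}M_K$ summand entirely, leaving $(p_\tau\delta_\tau)\big|_{\tau^\circ}=0$.

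The main obstacle is converting this pointwise vanishing into genuine membership $p_\tau\delta_\tau\in F^{k-1}M_K$. I would handle this by a normal-form reduction: in coordinates adapted to $H_\tau$, the relations $x_j\delta_\tau=0$ for $j>k$ (from (ii)) and $\partial_j\delta_\tau\in F^{k-1}M_K$ for $j\leq k$ (from (i)) allow one to rewrite $p_\tau\delta_\tau$ modulo $F^{k-1}M_K$ as a finite sum $\sum c_{\alpha\beta}\,x_T^\alpha\partial_N^\beta\delta_\tau$ with tangent monomials $x_T^\alpha$ and normal derivatives $\partial_N^\beta$. Pairing against separable test functions $\phi(x_T,x_N)=f(x_T)g(x_N)$ supported in a neighborhood of $\tau^\circ$ shows the family $\{x_T^\alpha\partial_N^\beta\delta_\tau\}_{\alpha,\beta}$ is linearly independent as distributions on $\tau^\circ$ (different $\beta$ are separated by choice of Taylor coefficients of $g$ at $0$, different $\alpha$ by choice of moments of $f$), so vanishing on $\tau^\circ$ forces every $c_{\alpha\beta}=0$ and hence $p_\tau\delta_\tau\in F^{k-1}M_K$. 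Running this argument for each $\tau$ separately shows that every summand of $\bigoplus_\sigma i_+^0\CC[H_\sigma]$ injects into $Q_k$, which gives both semisimplicity and the claimed decomposition.
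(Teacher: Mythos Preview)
Your proposal does not address the stated theorem. Theorem~\ref{Thm:Kashiwara} is Kashiwara's equivalence for closed immersions, a general structural result about $D$-modules that makes no reference to polytopes, cells, or the skeleton filtration. The paper itself does not prove it: the theorem is quoted as background inside a remark, and the reader is referred to \cite{Malgrange} and \cite{Coutinho} for proofs. What you have actually written is a proof of Proposition~\ref{Prop:filtr-quots} (semisimplicity of the skeleton filtration quotients $Q_k$), and you explicitly \emph{invoke} Theorem~\ref{Thm:Kashiwara} as an ingredient in that argument. As a proof of~\ref{Thm:Kashiwara} the proposal is therefore circular.

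If your intended target was Proposition~\ref{Prop:filtr-quots}, then your approach is essentially the paper's: both use the standard relations of Proposition~\ref{Prop:relations} to produce a surjection $i_+^0\CC[H_\sigma]\surj D_X\cdot\overline{\delta_\sigma}$, appeal to simplicity of the source via Kashiwara, and check nonvanishing by a support-dimension argument on the underlying distributions. You supply considerably more detail on the directness of the sum---the normal-form reduction modulo $F^{k-1}M_K$ and the separation of $\{x_T^\alpha\dd_N^\beta\delta_\tau\}$ by test functions---whereas the paper dispatches this with ``directness of the sum follows easily''; your version is a legitimate and correct expansion of that step, but it is not what was asked.
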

Kashiwara's theorem is very useful, among other things it allows us to assume we are in maximal dimension when we need to. See \cite[IV]{Malgrange} or \cite{Coutinho} for elegant expositions of the proof. 
\end{Rem}

\begin{proof}[Proof of \ref{Prop:filtr-quots}]
It is clear that $Q_k$ is generated by the (classes of the) $k$-cells, namely $Q_k=\sum D_X\cdot\overline{\delta_\sigma}$. We must show two things: that $D_X\cdot\overline{\delta_\sigma}$ is of the given form, and that the sum is direct.

We may assume by choosing coordinates appropriately that $H_\sigma$ is the affine flat $x_{k+1}-p_{k+1}=\cdots = x_m-p_m = 0$. From the standard relations given in Proposition \ref{Prop:relations}, it follows that 
$$\begin{array}{rl} 
\dd_j\overline{\delta_\sigma}=0, & j\le k \\ 
(x_j-p_j)\overline{\delta_\sigma}=0, & j>k.
\end{array}$$
Indeed, we have $\dd_v\cdot\delta_\sigma=\sum_j\langle v|n_j\rangle\delta_{\sigma_j}$ for $v$ parallel to $H_\sigma$, and in the quotient the right-hand side disappears, so we are left with $\dd_v\overline{\delta_\sigma}=0$.

The existence of these relations implies that there is a surjective map
$$i_+^0\CC[H_\sigma]\to D_X\cdot\overline{\delta_\sigma}$$
and as the first module is simple by Kashiwara's Theorem (\ref{Thm:Kashiwara}), this is an isomorphism unless $D_X\cdot\overline{\delta_\sigma}$ is the zero module. It is not, as $\overline{\delta_\sigma}=0$ would imply that $\delta_\sigma$ is some linear combination of distributions with support on lower-dimensional cells, which cannot be true as their supports have different dimension. Directness of the sum follows easily.
\end{proof}

\begin{Cor}\label{Cor:simple}
  The modules $D_X\cdot\delta_{H_\sigma}$ and $D_X\cdot\overline{\delta_\sigma}$ are isomorphic, and also simple.
\end{Cor}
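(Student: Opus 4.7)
The plan is to deduce the corollary directly from the proof of Proposition \ref{Prop:filtr-quots}, by applying the same argument to the generator $\delta_{H_\sigma}$ in place of $\overline{\delta_\sigma}$. First I would recall what that proof established: after choosing coordinates so that $H_\sigma$ is cut out by $x_{k+1}-p_{k+1}=\cdots=x_m-p_m=0$, the class $\overline{\delta_\sigma}$ satisfies the relations $\dd_j\overline{\delta_\sigma}=0$ for $j\le k$ and $(x_j-p_j)\overline{\delta_\sigma}=0$ for $j>k$, giving a surjection $i_+^0\CC[H_\sigma]\surj D_X\cdot\overline{\delta_\sigma}$ which is an isomorphism because the source is simple (Kashiwara, Theorem \ref{Thm:Kashiwara}) and the target is nonzero.

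Next I would observe that $\delta_{H_\sigma}$ satisfies the very same set of relations. The relations in $I(\sigma)$ kill $\delta_{H_\sigma}$ by Proposition \ref{Prop:relations}$(ii)$ (the statement applies to any polytope, and $H_\sigma$ itself is one in the sense of the paper's definition). For tangential derivatives, $H_\sigma$ has no facets, so Stokes' theorem (equivalently, integration by parts against a compactly supported test function) gives $\dd_v\cdot\delta_{H_\sigma}=0$ for every $v$ tangent to $H_\sigma$. This provides a surjection $i_+^0\CC[H_\sigma]\surj D_X\cdot\delta_{H_\sigma}$; since $\delta_{H_\sigma}\ne 0$ the target is nonzero, and Kashiwara's theorem again forces this surjection to be an isomorphism.

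Combining the two isomorphisms gives $D_X\cdot\delta_{H_\sigma}\cong i_+^0\CC[H_\sigma]\cong D_X\cdot\overline{\delta_\sigma}$, and simplicity of both modules is immediate from the simplicity of $\CC[H_\sigma]$ as a $D_{H_\sigma}$-module together with the equivalence of categories in Theorem \ref{Thm:Kashiwara}.

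The only subtle point is verifying $\dd_v\cdot\delta_{H_\sigma}=0$ directly, since $H_\sigma$ is generally non-compact and Proposition \ref{Prop:relations}$(i)$ was phrased with a sum over facets in mind; but the integration-by-parts computation is immediate because test functions have compact support, so there are no boundary contributions. Everything else is a formal consequence of the argument already carried out for the filtration quotients.
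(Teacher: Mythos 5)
Your proof is correct and follows exactly the route the paper intends: the corollary is stated without proof precisely because the argument of Proposition \ref{Prop:filtr-quots} applies verbatim to $\delta_{H_\sigma}$, whose relations (annihilation by $I(H_\sigma)$ and by tangential derivatives, the latter because $H_\sigma$ has no facets) identify $D_X\cdot\delta_{H_\sigma}$ with the simple module $i_+^0\CC[H_\sigma]$, hence with $D_X\cdot\overline{\delta_\sigma}$. Your care about the non-compactness of $H_\sigma$ in verifying $\dd_v\cdot\delta_{H_\sigma}=0$ is a reasonable extra check and does not change the argument.
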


\begin{Rem}\label{Rem:tool}
  This implies that we can write $D_X\cdot\overline{\delta_\sigma}$ as $\CC[x_1,\ldots,x_k,\dd_{k+1},\ldots,\dd_m]$ for suitable coordinates $x_i$, where $k=\dim(\sigma)$. Suppose the coordinates are chosen so that $H_\sigma=\{x_{k+1}=p_{k+1},\ldots,x_m=p_m\}$, then for $i\le k$, $\dd_i$ acts by $\dd_i\cdot x_i = 1$, and for $i>k$, $x_i$ acts by $x_i\cdot\dd_i=p_i\dd_i-1$.
\end{Rem}

Because we now have a composition series for $M_K$ with regular holonomic quotients, and regular holonomicity is preserved under extensions, we deduce the following:

\begin{Cor}\label{Cor:holonomic}
  $M_K$ is regular holonomic.
\end{Cor}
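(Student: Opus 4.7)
The plan is to combine Proposition \ref{Prop:filtr-quots} with standard closure properties of the class of regular holonomic $D$-modules. Specifically, I will show that each filtration quotient $Q_k$ is regular holonomic, and then conclude by induction along the skeleton filtration using closure under extensions.

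First I would recall that $\CC[H_\sigma]$, viewed as a $D_{H_\sigma}$-module via the tautological action, is the structure module of an affine space; this is a classical example of a simple regular holonomic $D$-module (its characteristic variety is the zero section, and its only solution is the constant sheaf, which certainly has regular singularities). Since $i:H_\sigma\inj X$ is a closed immersion, Kashiwara's equivalence (Theorem \ref{Thm:Kashiwara}) identifies $i_+^0\CC[H_\sigma]$ with $\CC[H_\sigma]$ as abstract $D$-modules, and it is a standard fact that direct image under a closed immersion preserves the property of being regular holonomic. Hence each summand appearing in Proposition \ref{Prop:filtr-quots} is regular holonomic.

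Next I would note that for each $k$ the cell complex $K$ has only finitely many $k$-cells (implicit in working with a finite polyhedral cell complex), so $Q_k$ is a finite direct sum of regular holonomic modules, hence regular holonomic. The skeleton filtration
\[
0=F^{-1}M_K\sse F^0M_K\sse F^1M_K\sse\cdots\sse F^mM_K=M_K
\]
then exhibits $M_K$ as an iterated extension of the $Q_k$. Since the class of regular holonomic modules is closed under extensions (being a Serre subcategory of the category of coherent $D_X$-modules), a straightforward induction on $k$ shows that $F^kM_K$ is regular holonomic for every $k$, and the case $k=m$ gives the statement.

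The only step that requires a moment of care is the assertion that $i_+^0\CC[H_\sigma]$ is regular holonomic; everything else is formal closure under direct sums and extensions. Since this is an established property in the references already cited (e.g.\ \cite{Borel}, \cite{Malgrange}), there is no real obstacle, and the proof amounts to little more than stringing these facts together.
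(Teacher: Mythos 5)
Your proposal is correct and follows the same route as the paper: the paper deduces the corollary in one sentence from the skeleton filtration of Proposition \ref{Prop:filtr-quots}, noting that the filtration quotients are regular holonomic and that regular holonomicity is closed under extensions. You have merely filled in the standard details (that $i_+^0\CC[H_\sigma]$ is regular holonomic because $\CC[H_\sigma]$ is and closed immersions preserve this), which the paper leaves implicit.
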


\begin{Prop}\label{Prop:presentation}
$M_K$ is the quotient of the free module generated by the cells of $K$, by the submodule generated by the standard relations given in Proposition \ref{Prop:relations}. Letting $c$ be the number of cells in $K$, this submodule is generated by a total of $m\cdot c$ 
relations, and hence there is a canonical presentation $$D_X^{m\cdot c}\to D_X^{c}\surj M_K,$$ where the last map is given by $\sum_{\sigma\sse K}p_\sigma\cdot g_\sigma\mapsto \sum_{\sigma\sse K} p_\sigma\delta_{\sigma}$, and $D_X^c=\bigoplus D_X\cdot g_\sigma$ is the free module generated by the cells of $K$.
\end{Prop}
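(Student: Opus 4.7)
The plan is to build the obvious map from the ``presentation'' side to $M_K$, verify it is surjective by construction, and then show its injectivity via a filtration argument that mirrors the skeleton filtration of Section \ref{subsec:skeleton-filtration}. The counting of relations comes out automatically.

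First I would fix notation: write the free module as $F = \bigoplus_{\sigma \sse K} D_X \cdot g_\sigma$ with $c$ summands, define $\pi: F \surj M_K$ by $g_\sigma \mapsto \delta_\sigma$, and let $R \sse F$ be the submodule generated by the lifts of the standard relations of Proposition \ref{Prop:relations}, i.e.\ the elements
\[ \dd_v \cdot g_\sigma + \sum_i \langle v | n_i\rangle g_{\sigma_i} \quad\text{and}\quad p \cdot g_\sigma, \quad p \in I(\sigma), \]
for each cell $\sigma$ and each tangent vector $v$. For fixed $\sigma$ of dimension $k$, type (i) contributes $k$ independent relations (one per basis vector of the tangent space of $H_\sigma$) and type (ii) contributes $m-k$ (the minimal generators of the prime ideal $I(\sigma)$), giving $m$ per cell and $mc$ in total. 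By Proposition \ref{Prop:relations}, $R \sse \ker\pi$, so $\pi$ descends to a surjection $\tilde\pi : N := F/R \surj M_K$; what remains is to prove $\tilde\pi$ is injective.

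Next I would equip $N$ with the filtration $F^k N := $ image in $N$ of $\bigoplus_{\dim \sigma \le k} D_X\cdot g_\sigma$. The map $\tilde\pi$ preserves filtrations and matches $F^kN$ to the skeleton filtration $F^k M_K$. I would then argue by induction on $k$ that $\tilde\pi : F^kN \to F^kM_K$ is an isomorphism; the base case $k = -1$ is trivial. For the inductive step, the five lemma reduces the problem to showing that the induced map on the quotient $F^kN/F^{k-1}N \to F^kM_K/F^{k-1}M_K$ is an isomorphism. Writing $\widetilde{g_\sigma}$ for the class of $g_\sigma$ in $F^kN/F^{k-1}N$ with $\dim\sigma = k$, the type (ii) relation gives $I(\sigma)\cdot\widetilde{g_\sigma}=0$, and the type (i) relation becomes $\dd_v\widetilde{g_\sigma}=0$ because every $g_{\sigma_i}$ on the right-hand side lies in $F^{k-1}N$ and so is killed in the quotient. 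Exactly as in the proof of Proposition \ref{Prop:filtr-quots}, this produces a surjection $i_+^0 \CC[H_\sigma] \surj D_X\cdot \widetilde{g_\sigma}$, which by Kashiwara's theorem \ref{Thm:Kashiwara} is either an isomorphism or zero.

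Finally, composing with $\tilde\pi$ sends $\widetilde{g_\sigma}$ to $\overline{\delta_\sigma}$, which is a nonzero generator of the corresponding simple summand of $F^kM_K/F^{k-1}M_K$ by Proposition \ref{Prop:filtr-quots}. Therefore each summand of $F^kN/F^{k-1}N$ maps isomorphically to the corresponding summand of $F^kM_K/F^{k-1}M_K$; the directness of the decomposition on the target side forces $F^kN/F^{k-1}N$ itself to decompose as the same direct sum, and the map is an isomorphism. The main obstacle I anticipate is precisely this last point, namely promoting the simple quotients into a genuine direct sum in $N$; the key observation that makes it go through is that the filtration quotients on both sides are semisimple of finite length with identical simple constituents, so a surjection between them must be an isomorphism. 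Carrying out the induction then finishes the proof.
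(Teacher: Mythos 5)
Your proposal is correct and follows essentially the same route as the paper: both pass to the skeleton filtration, reduce to the graded pieces, and identify each piece with $i_+^0\CC[H_\sigma]$ using the standard relations. The only cosmetic difference is that the paper verifies exactness of the graded presentation by computing the cokernel of the relation map explicitly as $D_X/(\sum_{i\le k}D_X\dd_i+\sum_{i>k}D_Xx_i)$, whereas you deduce the same isomorphism from simplicity (Kashiwara) plus the nonvanishing of $\overline{\delta_\sigma}$; both are valid.
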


\begin{proof}
Let us first define the maps properly. We label the generators of $D_X^c$ by the cells of $K$ --- so that $D_X^c$ is freely generated by generators $g_\sigma$, for all the $\sigma\sse K$ --- and let the map $D_X^c\to M_K$ be given by $g_\sigma\mapsto \delta_\sigma$. 

For each cell $\sigma\sse K$, the standard relations of type $(i)$ and $(ii)$ form vector spaces of dimension $\dim(H_\sigma)$ and $m-\dim(\sigma)$ respectively, so for each $\sigma$ there are $m$ linearly independent (over $\CC$) relations that generate all. Each can be written as a $D_X$-linear combination $P^\sigma(\delta_\sigma,\ldots,\delta_{\sigma_k})=0$.

We now let $D_X^r$ be freely generated by generators $r_{P^\sigma}$, one for each generating standard relation, and define the map $D_X^r\to D_X^c$ by $r_{P^\sigma}\mapsto P^\sigma(g_\sigma, \ldots,g_{\sigma_k})$.

The skeleton filtration on $M_K$ induces filtrations on $D_X^c$ and $D_X^r$, in both cases by dimension of $\sigma$: $F'^iD_X^c$ and $F''^iD_X^r$ are generated, respectively, by those $g_\sigma$ and $r_{P^\sigma}$ with $\dim(\sigma)\le i$. Both maps respect the filtration, so passing to the associated graded modules we see that $gr(D_X^r)\to gr(D_X^c)\surj gr(M_K)$ is a direct sum of sequences $D_X^m\stackrel{\alpha_\sigma}{\to}D_X\to D_X\cdot\overline{\delta_\sigma}$, one for each cell $\sigma$, where $\alpha_\sigma$ is the map given by given by $r_{P^\sigma}\mapsto P^\sigma(g_\sigma,0,\ldots,0)$. Only exactness in the middle is non-obvious. Choosing coordinates such that $H_\sigma$ is given by $x_{k+1}=\cdots =x_m=0$, so that $D_X\cdot\overline{\delta_\sigma}\simeq \CC[x_1,\ldots,x_k,\dd_{k+1},\ldots,\dd_m]$ as in \ref{Rem:tool}, we see that the cokernel of $\alpha_\sigma$ is $D_X/(\sum_{i\le k} D_X\cdot\dd_i + \sum_{i>k}D_X\cdot x_i)$, and this is clearly isomorphic to $\CC[x_1,\ldots,x_k,\dd_{k+1},\ldots,\dd_m]\simeq D_X\cdot\overline{\delta_\sigma}$.
\end{proof}

Using the canonical presentation, it is easy to show the following facts.

\begin{Cor}\label{Cor:etc}
If $K\sse L$ is a subcomplex, closed in $L$, then $M_K$ is a submodule of $M_L$. 
If $K_1$ and $K_2$ are glued along a subcomplex $F$, we have $M_{K_1\cup_F K_2}\simeq M_{K_1}\oplus_{M_F}M_{K_2}$.
\end{Cor}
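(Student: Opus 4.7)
The plan is to deduce both statements from the canonical presentation of Proposition \ref{Prop:presentation}, using the skeleton filtration of Definition \ref{Defn:skel-filt} to verify injectivity.

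For the first claim, the key observation is that when $K$ is closed in $L$, every facet of a cell $\sigma\sse K$ lies again in $K$. Hence the standard relations of type \ref{Prop:relations}$(i)$ for cells of $K$ involve only cells of $K$, and the relations of type $(ii)$ depend only on $\sigma$ itself. Thus every defining relation of $M_K$ is also a defining relation of $M_L$, so by the canonical presentation the assignment $\delta_\sigma\mapsto\delta_\sigma$ extends to a well-defined $D_X$-linear map $M_K\to M_L$. This map respects the skeleton filtration, and by Proposition \ref{Prop:filtr-quots} the induced map on the $k$-th graded piece is the obvious summand inclusion
$$\bigoplus_{\substack{\sigma\sse K\\ \dim\sigma=k}}D_X\cdot\overline{\delta_\sigma}\;\inj\;\bigoplus_{\substack{\sigma\sse L\\ \dim\sigma=k}}D_X\cdot\overline{\delta_\sigma},$$
which is manifestly injective. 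An easy induction on $k$, using the snake lemma on the short exact sequences $0\to F^{k-1}M_K\to F^kM_K\to Q_k^K\to 0$ (and analogously for $L$), shows that $F^kM_K\inj F^kM_L$ for every $k$, hence $M_K\inj M_L$.

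For the second claim, part (1) gives injections $M_F\inj M_{K_i}$ for $i=1,2$, and composing with the obvious maps into $M_{K_1\cup_F K_2}$ produces a commutative square. To see it is a pushout, I will compare canonical presentations. Writing $F_K$ for the free module on the cells of $K$ and $R_K$ for the submodule generated by standard relations, the cells of $K_1\cup_F K_2$ are the cells of $K_1$ and $K_2$ glued along the cells of $F$, so $F_{K_1\cup_F K_2}=F_{K_1}\oplus_{F_F}F_{K_2}$ as $D_X$-modules. The same holds for the relation submodules: a standard relation for a cell $\sigma$ involves only $\sigma$ and its facets, so relations for cells of $K_i\setminus F$ live entirely inside $F_{K_i}$, while relations for cells of $F$ are identified in the pushout. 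Since taking cokernels commutes with pushouts (as a colimit construction), the result follows.

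The main technical point where one must be careful is the bookkeeping of relations along the gluing locus in part (2): one has to check that no standard relation for a cell of $F$ accidentally introduces cells outside $F$, which is true because $F$ is a subcomplex of each $K_i$ and hence closed therein, so the facets of any $\sigma\sse F$ lie in $F$. This is the same closedness-type condition that made part (1) work, and it is what makes the whole argument go through cleanly.
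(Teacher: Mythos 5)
Your proof is correct and follows exactly the route the paper intends: the paper offers no written proof beyond the remark that the corollary follows ``using the canonical presentation'' of Proposition \ref{Prop:presentation}, and your argument is a careful fleshing-out of precisely that, with the skeleton filtration of Proposition \ref{Prop:filtr-quots} supplying the injectivity that the presentation alone does not give. The observation that closedness is what keeps the standard relations internal to $K$ (and to $F$ in the gluing case) is the right key point.
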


\begin{Thm}\label{Thm:cyclic} Recall that $\widehat{\sigma}$ denotes the cell complex consisting of a polytope $\sigma$ and all its faces. Assume for all faces $\alpha,\beta$ of $\sigma$ that if $H_\alpha\sse H_\beta$, then $\alpha$ is a face of $\beta$. Then $M_{\widehat{\sigma}}\simeq D_X\cdot\delta_\sigma \simeq D_X/Ann_{D_X}(\delta_\sigma)$.
\end{Thm}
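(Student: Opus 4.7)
The plan is to prove the theorem by exhibiting every generator $\delta_\alpha$ of $M_{\widehat{\sigma}}$ (as $\alpha$ ranges over faces of $\sigma$) as an element of the cyclic module $D_X\cdot\delta_\sigma$. The second isomorphism $D_X\cdot\delta_\sigma \simeq D_X/\mathrm{Ann}_{D_X}(\delta_\sigma)$ is immediate from the first isomorphism theorem applied to $P\mapsto P\cdot\delta_\sigma$, so all the content lies in $M_{\widehat{\sigma}}\simeq D_X\cdot\delta_\sigma$.

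I would proceed by downward induction on $\dim(\alpha)$, the base case $\alpha=\sigma$ being trivial. For the inductive step, note that any face $\alpha$ of codimension $c>0$ is a facet of some face $\beta$ with $\dim(\beta)=\dim(\alpha)+1$, obtained by relaxing one of the defining equalities of $\alpha$ back to an inequality. By the inductive hypothesis, $\delta_\beta\in D_X\cdot\delta_\sigma$. Applying a standard relation of type (i) in Proposition \ref{Prop:relations} to $\delta_\beta$, with a tangent vector $v$ to $H_\beta$ chosen so that $\langle v\mid n_\alpha\rangle\neq 0$, yields
\[
\partial_v\cdot\delta_\beta = -\langle v\mid n_\alpha\rangle\,\delta_\alpha \;-\; \sum_{\beta_j\neq\alpha}\langle v\mid n_j\rangle\,\delta_{\beta_j},
\]
where the $\beta_j$ run over the facets of $\beta$.

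The key step is to isolate $\delta_\alpha$ from this sum. Here the hypothesis of the theorem enters: for any facet $\beta_j\neq\alpha$ of $\beta$, since $\dim(\alpha)=\dim(\beta_j)$ and $\alpha$ is not a face of $\beta_j$, the hypothesis forces $H_\alpha\not\subseteq H_{\beta_j}$. Hence for each such $j$ one can pick a linear polynomial $p_j\in I(H_{\beta_j})$ that does not vanish on $H_\alpha$; the product $Q:=\prod_{\beta_j\neq\alpha}p_j$ then annihilates all the unwanted $\delta_{\beta_j}$ by standard relation (ii), while $Q|_{H_\alpha}$ remains nonzero. Multiplying the displayed relation by $Q$ gives $Q\partial_v\cdot\delta_\beta = -\langle v\mid n_\alpha\rangle\,Q\cdot\delta_\alpha$, so $Q\cdot\delta_\alpha\in D_X\cdot\delta_\sigma$. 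Since $D_X\cdot\delta_\alpha$ is simple by Corollary \ref{Cor:simple}, and $Q\cdot\delta_\alpha$ is a nonzero element of it (as $Q|_{H_\alpha}\neq 0$), we conclude $D_X\cdot(Q\cdot\delta_\alpha)=D_X\cdot\delta_\alpha$, and in particular $\delta_\alpha\in D_X\cdot\delta_\sigma$.

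The main obstacle is the isolation step: without the hypothesis, several facets of $\beta$ could share an affine hull with $\alpha$, and no polynomial operator could separate them. The hypothesis is tailor-made to rule this out, and once it does, simplicity of $D_X\cdot\delta_\alpha$ (which is really the nontrivial input, coming from Kashiwara's theorem) lets one invert the multiplication by $Q$ inside the cyclic module without actually inverting $Q$.
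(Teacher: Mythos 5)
Your reduction to the facet step is sound, and the isolation trick is nice: the hypothesis does guarantee that for distinct facets $\alpha\neq\beta_j$ of $\beta$ one has $H_\alpha\not\sse H_{\beta_j}$ (otherwise, comparing dimensions, $H_\alpha=H_{\beta_j}$ and the hypothesis would force $\alpha$ and $\beta_j$ to be faces of each other, hence equal), so the operator $Q\dd_v$ really does satisfy $Q\dd_v\cdot\delta_\beta=-\langle v|n_\alpha\rangle Q\cdot\delta_\alpha$ exactly, and $Q\cdot\delta_\alpha\in D_X\cdot\delta_\sigma$ follows. The gap is in the last step. Corollary \ref{Cor:simple} asserts simplicity of $D_X\cdot\delta_{H_\alpha}$ and of $D_X\cdot\overline{\delta_\alpha}$ (the class in the skeleton filtration quotient), \emph{not} of $D_X\cdot\delta_\alpha$. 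The latter is almost never simple: already for $\alpha=[0,1]\sse\RR$ the element $\dd_x\delta_\alpha=\delta_0-\delta_1$ generates a nonzero proper submodule supported on two points. So you cannot conclude $D_X\cdot(Q\delta_\alpha)=D_X\cdot\delta_\alpha$ by simplicity.

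What simplicity of the correct module does give is only that $\overline{\delta_\alpha}\in D_X\cdot Q\overline{\delta_\alpha}$ in $M_{\widehat\sigma}/F^{\dim\alpha-1}M_{\widehat\sigma}$, i.e. $\delta_\alpha\in D_X\cdot\delta_\sigma+(\text{distributions supported on proper faces of }\alpha)$. These error terms cannot be absorbed by your induction: the downward induction has not yet reached the lower-dimensional faces, and an upward induction would need $\delta_\beta\in D_X\cdot\delta_\sigma$ for the parent, which is not yet known either; the two requirements chase each other. Moreover the error terms are of the form $P_\gamma(\dd)\delta_\gamma$, and a polynomial in $I(H_\gamma)$ does not annihilate $P_\gamma(\dd)\delta_\gamma$ unless one controls $\deg P_\gamma$. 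This is exactly the difficulty the paper's proof is built to handle: it produces an operator sending $\delta_\sigma$ \emph{exactly} to a nonzero multiple of $\delta_\tau$, by applying a product of directional derivatives along all $1$-faces not in $H_\tau$, tracking that the coefficient of $\delta_\tau$ has strictly larger degree than all error coefficients, and then multiplying by sufficiently high powers of linear forms to kill the errors without destroying the leading term. To repair your argument you would need an analogous exact statement, e.g. a proof that $\delta_\alpha\in D_X\cdot Q\delta_\alpha$ (not merely modulo boundary terms), which requires a degree-counting argument of the same kind.
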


\begin{proof}
  The claim is true if for any $k$-face $\tau$ we can find a $P\in D_X$ such that $P\cdot\delta_\sigma=\delta_\tau$, and it suffices by repeated application to assume $\tau$ is a facet. Let now $A$ be the set of 1-faces $\alpha$ of $\sigma$ not lying in $H(\tau)$, that is, $\alpha$ is not a face of $\tau$. For each $\alpha\in A$, let $\dd_\alpha$ be a directional derivative along $\alpha$. As each face $\beta$ of $\sigma$ not in $H(\tau)$ is parallel to some $\alpha$, by \ref{Prop:relations} the action of $\dd_\alpha$ reduces $\delta_\beta$ to a sum of terms with support on the facets of $\beta$, specifically those facets not having $\alpha$ as a face. Consider now $(\prod_{\alpha\in A}\dd_\alpha)\cdot\delta_\sigma$, this will by the previous observation and the assumption on supports be equal to a sum of point distributions $\delta_p$ for points $p\not\in H(\tau)$, generators $\delta_{\gamma}$ for $\gamma$ a face of $\tau$, and $\delta_\tau$, each with coefficient some polynomial $P_p(\dd),P_\gamma(\dd)$ in the variables $\dd_i$. We make the claim that the degree of $P_\tau(\dd)=\prod_{\alpha\in A}\dd_\alpha$ is strictly larger than the degree of any other $P_p(\dd),P_\gamma(\dd)$. The reason is that because terms involving $\delta_p,\delta_\gamma$ are obtained by applying $\dd_\alpha$ to some $\delta_\beta$ with $\beta$ parallel to $\alpha$, and as we have the standard relation $\dd_\alpha\delta_\beta=\sum_jc_j\delta_{\beta_j}$ (where $\beta_j$ are the faces of $\beta$ not parallel to $\alpha$, and $c_j$ are constants), the polynomial coefficients of the $\delta_{\beta_j}$ have lower degree than the coefficient polynomial of $\delta_\beta$. As $\delta_\tau$ is by assumption not parallel to any $\alpha$, there are no standard relations reducing $\dd_\alpha\delta_\tau$ to a sum of $\delta_\gamma$'s, and so the coefficient of $\delta_\tau$ retains the maximal degree. 

So, $(\prod_{\alpha\in A}\dd_\alpha)\cdot\delta_\sigma = (\prod_{\alpha\in A}\dd_\alpha)\cdot\delta_\tau + \sum_pP_p(\dd)\delta_p+\sum_{\gamma\sse\tau}P_\gamma(\dd)\delta_\gamma$ with $\deg(P_p)$ and $\deg(P_\gamma)$ both strictly less than $\deg(\prod_{\alpha\in A}\dd_\alpha)=|A|$. Now, $H(\tau)$ is a hyperplane, and we may assume its defining equation is $x_m=0$. Each $p=(p_1,\ldots,p_m)$ occuring here lies in a hyperplane $x_m=p_m$, so acting on $(\prod_{\alpha\in A}\dd_\alpha)\cdot\delta_\tau + \sum_pP_p(\dd)\delta_p+\sum_{\gamma\sse\tau}P_\gamma(\dd)\delta_\gamma$ by $(x_m-p_m)^{|A|}$ kills $P_p(\dd)\delta_p$ (as $\deg(P_p)<|A|$), and moreover (as we are multiplying with a polynomial in the $x_i$ variables) clearly does not increase the degree of any other coefficient $P_*(\dd)$ in the sum (as these are polynomials in the $\dd_i$ variables). In this way, kill off the sum $\sum_pP_p(\dd)\delta_p$, and we are left with $C\cdot(\prod_{\alpha\in A}\dd_\alpha)\cdot\delta_\tau +\sum_{\gamma\sse\tau}P_\gamma(\dd)\delta_\gamma$ (where $C$ is some constant), and acting on this by $x_m^{|A|}$ we kill the sum $\sum_{\gamma\sse\tau}P_\gamma(\dd)\delta_\gamma$, and the term $C\cdot(\prod_{\alpha\in A}\dd_\alpha)\cdot\delta_\tau$ is reduced to some constant times $\delta_\tau$.
\end{proof}


\subsection{De Rham cohomology of $M_K$}\label{subsec:homology}

\begin{Defn}\label{Defn:deRham}
  The \emph{de Rham complex} $DR_X(M)$ of a left $D_X$-module $M$ is the complex $\Omega_X^\bullet\ten_{\CC[X]} M[m]$, with differential $d(\omega\ten m)=d\omega\ten m+\sum_idx_i\land\omega\ten\dd_im$.
\end{Defn}

\begin{Thm}\label{Thm:deRham}
  The de Rham complex $DR_X(M_K)$ of $M_K$ is quasi-isomorphic to the Borel-Moore homology chain complex $C_\bullet^{BM}(K,\CC)$.
\end{Thm}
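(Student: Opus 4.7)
The plan is to build an explicit chain map
$$\phi: C^{BM}_\bullet(K,\CC) \to DR_X(M_K)$$
sending a $k$-cell to a specific de Rham representative, and then show it is a quasi-isomorphism by comparing associated gradeds for the skeleton filtration. For each $k$-cell $\sigma\sse K$, choose coordinates on $X$ so that $H_\sigma=\{x_{k+1}=\cdots=x_m=0\}$ and set $\omega_\sigma:=dx_{k+1}\wedge\cdots\wedge dx_m$. I would define $\phi([\sigma])$ to be the class of $\omega_\sigma\ten\delta_\sigma$; since $\omega_\sigma$ is an $(m-k)$-form, this sits in cohomological degree $m-k$ of $\Omega_X^\bullet\ten M_K$, equivalently degree $-k$ after the $[m]$ shift in Definition \ref{Defn:deRham}, matching homological degree $k$ on the Borel--Moore side.

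To check $\phi$ is a chain map, compute
$$d(\omega_\sigma\ten\delta_\sigma)=\sum_{j=1}^m dx_j\wedge\omega_\sigma\ten\dd_j\delta_\sigma.$$
Terms with $j>k$ vanish because $dx_j$ already appears in $\omega_\sigma$. For $j\le k$ the vector $e_j$ is tangent to $H_\sigma$, so Proposition \ref{Prop:relations}$(i)$ rewrites $\dd_j\delta_\sigma$ as $-\sum_i\langle e_j|n_i\rangle\delta_{\sigma_i}$, summed over the facets $\sigma_i$ of $\sigma$. Collecting the coefficient of $\delta_{\sigma_i}$ gives $-\bigl(\sum_{j\le k}\langle e_j|n_i\rangle dx_j\bigr)\wedge\omega_\sigma = -n_i^\vee\wedge\omega_\sigma$, where $n_i^\vee$ is the $1$-form dual to the outward normal $n_i$. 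Since the conormal frame of the facet $\sigma_i$ extends that of $\sigma$ by $n_i^\vee$, this wedge is $\pm\omega_{\sigma_i}$, reproducing the signed cellular Borel--Moore boundary of $\sigma$.

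To prove $\phi$ is a quasi-isomorphism, filter the right-hand side by $DR_X(F^\bullet M_K)$ and $C^{BM}_\bullet$ by its cellular skeleton filtration; $\phi$ preserves both, so it is enough to check the induced map on associated gradeds. By Proposition \ref{Prop:filtr-quots} the $k$-th quotient on the right is a direct sum over $k$-cells of $DR_X(i_+^0\CC[H_\sigma])$. Writing $i_+^0\CC[H_\sigma]\simeq \CC[x_1,\ldots,x_k]\ten_\CC\CC[\dd_{k+1},\ldots,\dd_m]$ as in Remark \ref{Rem:tool}, the de Rham complex of this module splits as the algebraic de Rham complex of $\CC^k$ (with cohomology $\CC$ in degree $0$, by the algebraic Poincar\'e lemma) tensored with the Koszul complex of the regular sequence $(\dd_{k+1},\ldots,\dd_m)$ on $\CC[\dd_{k+1},\ldots,\dd_m]$ (with cohomology $\CC$ concentrated in top degree $m-k$). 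The unique cohomology class is represented by $\omega_\sigma\ten\delta_{H_\sigma}$, on which $\phi$ restricts to the tautological identification. Thus $\phi$ is an isomorphism on associated gradeds and hence a quasi-isomorphism.

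The main obstacle I anticipate is bookkeeping: carefully reconciling the cohomological grading on the de Rham side (together with its $[m]$ shift) with the homological grading of $C^{BM}_\bullet$, and fixing sign and orientation conventions so that $n_i^\vee\wedge\omega_\sigma$ returns the Borel--Moore boundary and not its negative. One also needs to be slightly careful when $K$ is not locally finite, so that $C^{BM}_\bullet(K,\CC)$ is interpreted as locally finite cellular chains and the filtration argument applies without change.
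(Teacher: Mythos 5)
Your proposal is correct and follows essentially the same route as the paper: both construct the chain map $[\sigma]\mapsto\omega_\sigma\ten\delta_\sigma$, verify via the standard relations that $d(\omega_\sigma\ten\delta_\sigma)=-\sum_j\omega_{\sigma_j}\ten\delta_{\sigma_j}$ reproduces the Borel--Moore boundary, and deduce the quasi-isomorphism from the skeleton filtration (the paper phrases this as the collapse of the associated spectral sequence at $E_1$, which is the same as your comparison of associated gradeds). Your explicit Künneth/Koszul computation of $H_{dR}(i_+^0\CC[H_\sigma])$ just spells out the $E_1$-page identification that the paper asserts more briefly.
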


\begin{proof}
  We observe that the differential in the de Rham complex respects the skeleton filtration, and because each filtration quotient $Q_k$ has support only of dimension $k$, the associated spectral sequence collapses on the $E_1$ page, with $E_1^{pq}=H_{dR}^{p+q}(Q_{-p})\simeq \CC^{a_k}$ if $(p,q)=(0,-k)$ and zero otherwise, to a single row 
$$\CC^{a_m}\to\CC^{a_{m-1}}\to\cdots\to\CC^{a_0},$$
where $a_k$ is the number of cells $\sigma \sse K$ with $\dim(\sigma)=k$ and $\CC^{a_k}$ is the vector space with generators $\omega_\sigma\ten\delta_\sigma:=dx_{k+1}\land\cdots\land dx_{m}\ten \overline{\delta_\sigma}$ (in coordinates such that $H_\sigma$ is parallel to $x_1=\cdots=x_k=0$) for each such $\sigma$.
It suffices to check a single generator (assuming suitable coordinates).
\begin{IEEEeqnarray*}{rCl} d(\omega_\sigma\ten\overline{\delta_\sigma})&=& \sum_{i=1}^k  dx_i\land \omega_\sigma\ten\dd_i\overline{\delta_\sigma} \\ 
&=& -\sum_{i=1}^k \sum_{j}\langle e_i|n_{j}\rangle dx_i\land \omega_\sigma\ten \overline{\delta_{\sigma_j}} \\ &=& -\sum_j\langle \sum_{i=1}^k e_i|n_j\rangle dx_i\land \omega_\sigma\ten\overline{\delta_{\sigma_j}}\\
&=& -\sum_jd(\sum_{i=1}^k e_i|n_j\rangle x_i)\land \omega_\sigma\ten\overline{\delta_{\sigma_j}}\\
&=& -\sum_j d(n_j)\land \omega_\sigma\ten\overline{\delta_{\sigma_j}}\\
&=& -\sum_j \omega_{\sigma_j}\ten\overline{\delta_{\sigma_j}}
\end{IEEEeqnarray*}
We see that the generator corresponding to $\delta_\sigma$ is sent to the sum of the generators corresponding to the boundary cells $\delta_{\sigma_j}$. Note that each generator $[\sigma]$ has closed support; $d$ thus corresponds to the boundary maps for chains of \emph{closed support}, i.e. the Borel-Moore homology boundary map, and we are done.

Quasi-isomorphism follows from the observation that the map $C_\bullet^{BM}\to DR_X(M_K)$ sending a homology class $[\sigma]$ to its corresponding generator $\omega_\sigma\ten\delta_\sigma$ is an injective chain map which by the above is the identity on (co)homology.
\end{proof}


\begin{Rem}\label{Rem:functor}
  The modules $M_K$ generate by taking extensions a subcategory of the category of regular holonomic $D_X$-modules, and it follows from the existence of the skeleton filtration that this category is equal to the category of regular holonomic $D_X$-modules admitting a composition series with quotients each isomorphic to $i_+^0\OO_{H}$ for some affine flat $H$. The assignment $K\mapsto M_K$ is a functor into this category from the category of polyhedral cell complexes and cellular maps, 
this functor is faithful and preserves finite limits and colimits. Moreover, \ref{Thm:deRham} gives us a commutative diagram of functors:
\begin{center}  \begin{tikzpicture}[node distance=2cm,auto]
    \node (K) {$K$};
    \node (M) [right of=K]{$M_K$};
    \node (C) [below right of=K]{$DR(M_K)$};
    \draw[|->] (K) to node {}(M);
    \draw[|->] (K) to node[swap]{{\tiny $C_\bullet^{BM}$}}(C);
    \draw[|->] (M) to node {}(C);
  \end{tikzpicture} \end{center}
\end{Rem}


\section{Annihilator ideals for $\delta_\sigma$}\label{sec:ann-id}

Recall that the polyhedral cell complex consisting of a single polytope $\sigma$ and all its faces, is denoted by $\widehat{\sigma}$. 
In this section, we apply our constructions to produce some tools that enable computation of the annihilator ideal $Ann_{D_X}(\delta_\sigma)$ for any polytope $\sigma$. By application of Kashiwara's theorem we may assume $\sigma$ is of maximal dimension. 

\begin{Prop}\label{Prop:ann-delta-sigma}
  For each vertex $p$ of $\sigma$, let $C_p$ be the 
cone at $p$ spanned by the faces intersecting $p$. 
Then $Ann_{D_X}(\delta_\sigma)=\cap_p Ann_{D_X}(\delta_{C_p})$.
\end{Prop}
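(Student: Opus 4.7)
The plan is to exploit the geometric observation that in a sufficiently small neighborhood $U_p$ of each vertex $p$ of $\sigma$, the sets $\sigma$ and $C_p$ coincide, so the distributions $\delta_\sigma$ and $\delta_{C_p}$ agree on $U_p$; since any $P\in D_X$ is a local operator, $P\delta_\sigma$ and $P\delta_{C_p}$ also agree on $U_p$. Both inclusions will reduce to the following local-to-global key lemma: \emph{if $K$ is a polyhedral cell complex such that every face of $K$ contains a vertex of $K$, then any $\eta\in M_K$ vanishing in an open neighborhood of each vertex of $K$ is zero.} The lemma applies to both $\widehat{\sigma}$ (each face of the polytope $\sigma$ is a bounded polytope, hence has a vertex that is also a vertex of $\sigma$) and $\widehat{C_p}$ (every face of $C_p$ contains the apex $p$, its unique vertex).

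Granting the lemma, both inclusions follow immediately. For the inclusion $Ann_{D_X}(\delta_\sigma)\sse Ann_{D_X}(\delta_{C_p})$: if $P\delta_\sigma=0$, then $P\delta_{C_p}|_{U_p}=P\delta_\sigma|_{U_p}=0$, so $P\delta_{C_p}\in M_{\widehat{C_p}}$ vanishes in a neighborhood of the unique vertex $p$ of $C_p$, forcing $P\delta_{C_p}=0$. For the reverse inclusion: if $P\delta_{C_p}=0$ for every vertex $p$ of $\sigma$, then $P\delta_\sigma\in M_{\widehat{\sigma}}$ vanishes on $U_p$ for every vertex, and the lemma again gives $P\delta_\sigma=0$.

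To prove the key lemma I would use downward induction on the skeleton filtration. By Proposition \ref{Prop:filtr-quots}, the quotient $Q_d=F^dM_K/F^{d-1}M_K$ decomposes as a direct sum $\bigoplus_{\dim\sigma'=d}i_+^0\CC[H_{\sigma'}]$ of simple $D_X$-modules, and since the projection is $D_X$-linear, the image of $\eta$ in $Q_d$ still vanishes on every $U_p$. In the summand indexed by a $d$-face $\sigma'$, choose any vertex $p$ of $K$ lying in $\sigma'$ (which exists by the hypothesis of the lemma); then $U_p\cap H_{\sigma'}$ is a nonempty open subset of $H_{\sigma'}$ on which the corresponding section vanishes, and simplicity of $i_+^0\CC[H_{\sigma'}]$ (Corollary \ref{Cor:simple}) forces that section to be zero. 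Hence $\eta\in F^{d-1}M_K$, and induction completes the argument.

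The main obstacle is the simplicity step: one must argue that a nonzero section of a simple $D_X$-module cannot vanish on any open set where the module has nontrivial restriction. This follows from simplicity, since such a section would generate the whole module, while every element of that submodule would also vanish on the same open set by locality of differential operators, contradicting nontrivial restriction there. The secondary input is the combinatorial fact that every face of $\sigma$ contains a vertex, which is essentially the boundedness of $\sigma$; without it, operators annihilating all the cones $\delta_{C_p}$ might fail to annihilate $\delta_\sigma$ on an unbounded face lacking a vertex.
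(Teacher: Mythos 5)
Your proof is correct, but it takes a genuinely different route from the paper's. The paper argues algebraically: for each vertex $p$ it forms the quotient $M_p$ of $M_{\widehat{\sigma}}$ by the submodule generated by the $\delta_\tau$ with $p\notin\tau$, identifies $M_p$ with the module of the cone $C_p$ via the standard relations, and observes that the diagonal map $M_{\widehat{\sigma}}\to\bigoplus_p M_p$ is injective because no cell other than $\sigma$ survives in every quotient; the equality of annihilators then falls out formally. You instead exploit the realization of $M_K$ as a module of actual distributions: $\delta_\sigma$ and $\delta_{C_p}$ agree near $p$, differential operators are local, and a local-to-global rigidity lemma (an element of $M_K$ vanishing near every vertex is zero, provided every face contains a vertex) closes both inclusions. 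Your approach buys two things: it makes visible the compactness hypothesis the paper leaves implicit --- the statement fails for, say, a half-space, which has no vertices, and ``every face contains a vertex'' is exactly the needed condition --- and your lemma is a reusable statement about the whole class of modules $M_K$. The paper's proof, by contrast, stays entirely inside the algebraic presentation and never invokes distribution theory. The one step in your write-up that needs more care is the induction: an element of $Q_d=F^dM_K/F^{d-1}M_K$ does not literally ``vanish on $U_p$'' until you realize $Q_d$ inside the distributions on the open complement of the $(d-1)$-skeleton, where $F^{d-1}M_K$ restricts to zero; on that open set $U_p$ still meets the relative interior of each $d$-face $\sigma'$ having $p$ as a vertex (not merely $H_{\sigma'}$, which is what you wrote and which by itself would not suffice), and there your simplicity argument applies verbatim to the summand generated by the restriction of $\delta_{\sigma'}$. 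With that adjustment the argument is complete.
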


\begin{proof}
  Let $M_p$ be the quotient of $M_{\widehat{\sigma}}$ given by dividing away the submodule generated by all $\delta_\tau$ for cells $\tau$ \emph{not} intersecting $p$. In geometric terms, $M_p$ is isomorphic to the module associated to the cone $C_p$ spanned by the faces intersecting $p$, and $\overline{\delta_\sigma}=\delta_{C_p}$. The map $M_{\widehat{\sigma}}\to \oplus_{p} M_p$ given by $\delta_\sigma\mapsto \sum_p \overline{\delta_\sigma}$ is an injection, because no cell except $\sigma$ is common to all the $C_p$; hence we have equality between the annihilator ideals $Ann_{D_X}(\delta_\sigma)= Ann_{D_X}(\sum_p \overline{\delta_\sigma})$. The latter is equal to the intersection $\cap_p Ann_{D_X}(\delta_{C_p})$.
\end{proof}

This reduces the problem to computing the annihilator ideals of the cones on the vertices of $\sigma$, which by translation is equivalent to cones at the origin. Before we give the general method, we can observe that in the special case of a simple cone we have the following nice geometric description:

\begin{Prop}\label{Prop:annihilator-simple-cone}
Let $C$ be the positive orthant in $\RR^n$.
The annihilator ideal of $\delta_C$ is the ideal $\sum_i\langle x_i\dd_i\rangle$.
\end{Prop}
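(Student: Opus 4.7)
The inclusion $J := \sum_i D_X\cdot x_i\dd_i \sse \text{Ann}_{D_X}(\delta_C)$ is immediate from Stokes' theorem: by the action formula for the Weyl algebra,
$$(x_i\dd_i\cdot\delta_C)(\phi) = -\int_C \dd_i(x_i\phi)\,dx,$$
which reduces to a boundary integral over the facet $\{x_i=0\}\cap C$, where the integrand $x_i\phi$ vanishes. This already yields a surjection $D_X/J \surj D_X\cdot\delta_C$; the reverse inclusion will follow by comparing characteristic cycles.

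For the target, observe that the faces of $C$ are the coordinate orthants $F_A = \{x_i=0,\ i\in A\}\cap C$ indexed by $A\sse\{1,\ldots,n\}$, and their affine hulls $H_{F_A}$ are pairwise distinct, so Theorem \ref{Thm:cyclic} identifies $D_X\cdot\delta_C$ with $M_{\widehat{C}}$. The skeleton filtration (Proposition \ref{Prop:filtr-quots}) then provides a length-$2^n$ composition series for $M_{\widehat{C}}$ with simple quotients $i_+^0\CC[H_{F_A}]$, whose characteristic varieties are the coordinate Lagrangians $L_A := T^*_{H_{F_A}} X$. Hence $CC(M_{\widehat{C}}) = \sum_A [L_A]$, each component with multiplicity one.

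For $D_X/J$ I use the order filtration on $D_X$: the symbol of $x_i\dd_i$ in $\mathrm{gr}\, D_X = \CC[x,\xi]$ is $x_i\xi_i$, so $\mathrm{gr}\, J \supseteq (x_1\xi_1,\ldots,x_n\xi_n)$ and $\mathrm{gr}(D_X/J)$ is a quotient of the reduced square-free monomial ring $\CC[x,\xi]/(x_i\xi_i)$, whose associated cycle is $\sum_A [L_A]$ with every multiplicity equal to one. This bounds the multiplicity of each $L_A$ in $CC(D_X/J)$ above by one, while additivity of $CC$ applied to the surjection onto $M_{\widehat{C}}$ bounds it below by one. So $CC(D_X/J) = CC(M_{\widehat{C}})$; the kernel of $D_X/J\surj M_{\widehat{C}}$ then has zero characteristic cycle and, being holonomic, must vanish, so the surjection is an isomorphism and $\text{Ann}_{D_X}(\delta_C) = J$. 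The main technical point is this multiplicity squeeze on $\mathrm{gr}(D_X/J)$, which neatly avoids having to verify directly that $\{x_i\dd_i\}$ is a Gr\"obner basis of $J$.
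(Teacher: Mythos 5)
Your argument is correct, but it takes a genuinely different route from the paper's. The paper gives a direct, elementary computation: working modulo $J=\sum_i D_X\cdot x_i\dd_i$, every operator reduces (using $\dd_ix_i\equiv 1$) to a combination of monomials $x_{I^c}^{\alpha}\dd_I^{\beta}$ indexed by subsets $I\sse\{1,\dots,n\}$, and the observation that $x_{I^c}^{\alpha}\dd_I^{\beta}\cdot\delta_C$ is a nonzero distribution supported on the face $C\cap\{x_I=0\}$ forces all coefficients of an annihilating operator to vanish. You replace this normal-form-plus-support analysis with a characteristic-cycle squeeze: the order filtration bounds each multiplicity in $CC(D_X/J)$ above by one (since $(x_1\xi_1,\dots,x_n\xi_n)$ is a radical squarefree monomial ideal cutting out exactly the $2^n$ conormals $L_A$, and passing to a quotient cannot increase multiplicities at generic points), while Theorem \ref{Thm:cyclic} --- whose hypothesis you correctly verify for the orthant --- together with Proposition \ref{Prop:filtr-quots} bounds each multiplicity in $CC(D_X\cdot\delta_C)$ below by one, and additivity of $CC$ in short exact sequences then kills the kernel. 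This is a legitimate and arguably more robust argument: it sidesteps the slightly delicate point in the paper's proof that the various terms $x_{I^c}^{\alpha}\dd_I^{\beta}\delta_C$ for a fixed $I$ are linearly independent, and, as you note, it never needs the symbols $x_i\xi_i$ to generate all of $\mathrm{gr}\,J$. The cost is that you import machinery the paper never uses (characteristic cycles, their additivity, and the fact that a holonomic module with zero characteristic cycle vanishes), and you lean on Theorem \ref{Thm:cyclic} for the lower bound where the paper's argument is self-contained. Both proofs establish the proposition.
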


\begin{proof}
(i): It is clear that $\sum_i\langle x_i\dd_i\rangle\sse Ann_{D_X}(\delta_C)$, and any $P\in Ann(\delta_C)$ can be written as $P=\sum_{ I\sse \{1,\ldots,n \} } c_I x_{I^c}^{\alpha_I}\dd_{I}^{\beta_I}$ (modulo $\sum_i\langle x_i\dd_i\rangle$), here $I^c$ denotes the complement of $I$, and $x_J^{\alpha_J}:=\prod_{j\in J}x_j^{\alpha_j}$ etc. Observe that the standard relations imply $\supp(x_{I^c}^{\alpha_I}\dd_I^{\beta_I}\cdot\delta_C)=\supp(\dd_I^{\beta_I}\cdot\delta_C)=C\cap\{x_I=0\}$, so for $P\cdot\delta_C$ to be zero, every $c_I$ must be zero, and $P$ must belong to $\sum_i\langle x_i\dd_i\rangle$.
\end{proof}

\begin{Ex}
  The standard 2-simplex in $\RR^2$ has vertices at $(0,0)$, $(1,0)$ and $(0,1)$; in coordinates $(x,y)$ the annihilator ideals of the respective cones are $\langle x \dd_x,y\dd_y\rangle$, $\langle (x+y-1)\dd_x,y(\dd_x-\dd_y)\rangle$ and $\langle (x+y-1)\dd_y,x(\dd_x-\dd_y)\rangle$. Using e.g. the \verb|Dmodules| package of the \verb|Macaulay2| computer algebra suite, we compute that the annihilator ideal is equal to $\langle x(x+y-1)\dd_x,y(x+y-1)\dd_y\rangle$.
\end{Ex}

For non-simple cones there is no neat geometric argument, but there is a general algebraic method that combines the standard relations \ref{Prop:relations} with the algebraic Fourier transform. Recall that the algebraic Fourier transform is the automorphism of $D_X$ given by $x_i\mapsto \dd_i,\dd_i\mapsto -x_i$. Twisting $M_\sigma$ with this automorphism gives the module generated by the Laplace transform $L\delta_\sigma$ of $\delta_\sigma$, and in the case where $\sigma$ is a cone at the origin, the Laplace transform is a rational function. Algorithms exist for computing the annihilator ideal of a rational function (see \cite{MR1808827} and \cite{MR1769663}), so by computing the annihilator ideal of $L\delta_\sigma$ and taking its Fourier transform, we recover the annihilator ideal of $\delta_\sigma$. Expressing $L\delta_\sigma$ as a rational function can be done by a variable elimination on the standard relations, as described below.

\begin{Alg}\label{Alg:annihilator-ideal}

Computes the annihilator ideal of a cone $\sigma$ at the origin.

\emph{Input}: the Fourier transforms of the standard relations in $M_\sigma$ of type $(i)$ (these are equations of the form $v\mu_\sigma = -\sum \langle v|n_i\rangle \mu_{\sigma_i}$, see \ref{Prop:relations}, where we denote by $\mu_\tau$ the transform of $\delta_\tau$).

\emph{Output}: the annihilator ideal of $\delta_\sigma$.
\begin{enumerate}
\item Considering all the $\mu_{\sigma_i}$'s as formal variables, eliminate from the transformed equations all the $\mu_{\sigma_i}$ except for $\mu_\sigma$ itself; this expresses $\mu_\sigma$ as a rational function in the original variables $x_i$.
\item Compute the annihilator ideal of this rational function.
\item Take the Fourier transform of this ideal.
\end{enumerate}
\end{Alg}

\begin{Ex}
  Let $\sigma$ be the cone in $\RR^3$ with rays $(1,0,0)$, $(0,1,0)$, $(0,0,1)$ and $(-1,-1,-1)$ (connected in that order). The Laplace transform of $\delta_\sigma$ is $L\delta_\sigma=\frac{x+z}{xyz(x+y+z)}$. The annihilator ideal of $L\delta_\sigma$ is generated by the elements $x\dd_x+y\dd_y+z\dd_z+3$, $z\dd_y\dd_z-z\dd_z^2+\dd_y-2\dd_z$, $yz(\dd_y^2-\dd_z^2)+2z\dd_y-2y\dd_z-2z\dd_z-2$, $yz\dd_x(\dd_y-\dd_z)+z(\dd_x-\dd_z)-y\dd_y-2$ and $yz(x+z)(\dd_y-\dd_z)-xy+xz+z^2$. Its Fourier transform is the annihilator ideal of $\delta_\sigma$, and is generated by $x\dd_x+y\dd_y+z\dd_z$, $(y-z)z\dd_z$, $(y^2-z^2)\dd_y\dd_z+2z\dd_z$, $(y-z)x\dd_y\dd_z+(y-x)\dd_y+z\dd_z$ and $(z-y)(\dd_x+\dd_z)\dd_y\dd_z+2\dd_y\dd_z$.
\end{Ex}


\section{Direct images and $B$-splines}\label{sec:dir-im}

We consider the projection map $\pi:\RR^m=\RR^s\times\RR^{m-s}\to \RR^s$ given by projecting on the first $s$ coordinates, and its complexification $\pi:\CC^m=\CC^s\times\CC^{m-s}\to \CC^s$. For convenience we denote as before $X=\CC^m$, and also $Y=\CC^s,Z=\CC^{m-s}$, and use coordinates $(y,z)$ on $X$. The direct image functor from $D_X$-modules to $D_Y$-modules is given by
$$\pi_+M:=DR_{X/Y}(M),$$
see \cite{Malgrange},\cite[V,VI]{Borel} for details. In considering the direct image of $M_K$, we will focus in particular on the zeroth-level part 
\begin{equation}\label{eqn:proj-quot}\pi_+^0M_K\simeq M_K/\sum_j\dd_{z_j}M_K.\end{equation}

Some useful notation:

\begin{Defn}\label{Defn:fiber-dim}For a cell $\sigma\sse K$, let $v(\sigma):=\dim(\sigma)-\dim(\pi(\sigma))$ (which is the same as the dimension of a generic fiber $\pi^{-1}(x)\cap K$ for a point $x\in int(\pi(\sigma))$). We call $v(\sigma)$ the \emph{fiber dimension} of $\sigma$. In particular, if  $\dim(\sigma)=\dim(X)=m$, then $v(\sigma)=m-s$.

We also extend this notation to the whole complex, and let $v(K):=\dim(K)-\dim(\pi(K))$. Note that a complex $K$ can contain cells $\sigma$ with $v(\sigma) > v(K)$. 
\end{Defn}

\begin{Prop}[Standard relations for $\pi_+^0M_K$]\label{Prop:pi_+^0-rels}
  Let $\sigma$ be a cell in $K$ of top dimension (i.e. $\dim(\sigma)=\dim(X)=m$), with facets $\sigma_i$ with outward unit normals $n_i$, and let $\pi:X=Y\times Z\to Y$ be the projection on the first $s$ coordinates. Denote the class of $\delta_\sigma$ in the direct image $\pi_+^0M_K$ by $\overline{\delta_\sigma}$.
Then the following relations hold:
\begin{enumerate}
\item[(i)]$\dd_{\pi(z)}\overline{\delta_\sigma} = -\sum_i\langle \pi(z) | n_i\rangle \overline{\delta_{\sigma_i}}$, for any point $z$ in $H_\sigma$ (where we let $\dd_{\pi(z)}:=\sum_i\langle e_i|\pi(z)\rangle \dd_i$),
\item[(ii)]$\sum_i\langle v|n_i\rangle\overline{\delta_{\sigma_i}}=0$, for any $v\in \ker(\pi)$, and
\item[(iii)]$v(\sigma)\overline{\delta_\sigma} = \sum_i(d_i-\sum_{j\le s}\langle e_j|n_i\rangle x_j)\cdot\overline{\delta_{\sigma_i}}$, where $\sum_j\langle e_j|n_i\rangle x_j-d_i=0$ is the defining equation of $H_{\sigma_i}$.
\item[(iv)]$p(x)\cdot\overline{\delta_\sigma}=0$ for any $p(x)\in I(H_{\pi(\sigma)})$.
\end{enumerate}
\end{Prop}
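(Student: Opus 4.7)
My plan is to derive each of the four relations from the standard relations of Proposition~\ref{Prop:relations} applied inside $M_K$, and then descend to the quotient $\pi_+^0M_K = M_K/\sum_j\dd_{z_j}M_K$. Top-dimensionality of $\sigma$ buys us two things for free: first, $H_\sigma = X$, so every vector in $\CC^m$ is tangent to $H_\sigma$ and the full strength of \ref{Prop:relations}(i) is available; second, $\pi(\sigma)$ has interior in $Y$, so $H_{\pi(\sigma)} = Y$ and $\dim\pi(\sigma) = s$, whence $v(\sigma) = m-s$. The crucial bookkeeping fact about the quotient is that $\dd_{z_j}\cdot m$ vanishes in $\pi_+^0M_K$ for every $m\in M_K$ and every $j$.

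I expect (i), (ii), and (iv) to be essentially immediate. For (i), I would apply \ref{Prop:relations}(i) inside $M_K$ with tangent vector $v = \pi(z)\in Y\subseteq X$ and then project. For (ii), the vector $v\in\ker(\pi)$ makes $\dd_v$ a $\CC$-linear combination of $\dd_{z_j}$'s, so $\dd_v\overline{\delta_\sigma}=0$ in the quotient by construction; combined with \ref{Prop:relations}(i) this gives $\sum_i\langle v|n_i\rangle\overline{\delta_{\sigma_i}}=0$. Relation (iv) is vacuous: top-dimensionality forces $H_{\pi(\sigma)} = Y$, so $I(H_{\pi(\sigma)}) = 0$.

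The substantive step is (iii). My plan is to evaluate the operator $\sum_{j>s}x_j\dd_j$ on $\delta_\sigma$ in two ways. On the one hand, rewriting each $x_j\dd_j$ as $\dd_jx_j-1$ and discarding the $\dd_{z_j}$-terms in the quotient leaves $-(m-s)\overline{\delta_\sigma} = -v(\sigma)\overline{\delta_\sigma}$. On the other, expanding each $\dd_j\delta_\sigma$ by \ref{Prop:relations}(i) gives $-\sum_i\sum_{j>s}\langle e_j|n_i\rangle x_j\overline{\delta_{\sigma_i}}$. Equating the two and then invoking \ref{Prop:relations}(ii) on each facet $\sigma_i$ (whose defining equation is $\sum_j\langle e_j|n_i\rangle x_j = d_i$) to trade $\sum_{j>s}\langle e_j|n_i\rangle x_j\cdot\overline{\delta_{\sigma_i}}$ for $(d_i-\sum_{j\le s}\langle e_j|n_i\rangle x_j)\cdot\overline{\delta_{\sigma_i}}$ should produce the stated form. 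The commutator trick in (iii) is really the only thing one has to spot; every other step is a direct application of \ref{Prop:relations} combined with the vanishing of $\dd_{z_j}$-terms in the quotient, so I do not anticipate any serious obstacle.
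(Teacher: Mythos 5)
Your proposal is correct and follows essentially the same route as the paper: (i) and (ii) by descending the standard relations of Proposition~\ref{Prop:relations} to the quotient $M_K/\sum_j\dd_{z_j}M_K$, (iii) by evaluating $\sum_{j>s}\dd_jx_j\overline{\delta_\sigma}=0$ via the commutator $[\dd_j,x_j]=1$ and then trading $\sum_{j>s}\langle e_j|n_i\rangle x_j$ for $d_i-\sum_{j\le s}\langle e_j|n_i\rangle x_j$ using the defining equation of $H_{\sigma_i}$, and (iv) from the support statement (which, as you note, is vacuous in the top-dimensional case since $H_{\pi(\sigma)}=Y$).
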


\begin{proof}For $j\le s$, the action of $\dd_j$ is unchanged in the quotient (\ref{eqn:proj-quot}), which implies $(i)$. For $j>s$, $\dd_j\delta_\sigma$ is zero in the quotient (\ref{eqn:proj-quot}): $0=\dd_j\delta_\sigma=\sum_i\langle e_j|n_i\rangle\overline{\delta_{\sigma_i}}$ and since $\ker(\pi)=\langle e_j|j>s\rangle$, we get $(ii)$.

The affine spans $H_{\sigma_i}$ of the boundary cells $\sigma_i$ are defined by equations $\langle x|n_i\rangle=d_i$ for some constants $d_i$. Now $\sum_{j>s} \dd_{j}x_{j}\overline{\delta_\sigma}=0$, because $\sum \dd_jx_j$ is in the ideal $\sum_{j>s} \dd_jD_X$. We then get $0=\sum_{j>s}\dd_jx_j\overline{\delta_\sigma}=\sum_{j>s}(1+x_j\dd_j)\overline{\delta_\sigma}$, or (using $v(\sigma)=m-s$) $$\left((m-s)+\sum_{j>s}x_j\dd_j\right)\overline{\delta_\sigma}=\left(v(\sigma)+\sum_{j>s}x_j\dd_j\right)\overline{\delta_\sigma}=0.$$
Let us expand this:
\begin{eqnarray*}v(\sigma)\overline{\delta_\sigma} &=&-\sum_{j>s}x_j\dd_j\overline{\delta_\sigma}\\
&=&\sum_{j>s}x_j\sum_i\langle e_j|n_i\rangle \overline{\delta_{\sigma_i}}\\
 &=& \sum_i(\sum_{j>s}\langle e_j|n_i\rangle x_j)\overline{\delta_{\sigma_i}}\\
&=&\sum_i(d_i-\langle e_1|n_i\rangle x_1-\cdots -\langle e_{s}|n_i\rangle x_{s})\overline{\delta_{\sigma_i}}\\
&=&\sum_i(d_i-\sum_{j\le s}\langle e_j|n_i\rangle x_j)\overline{\delta_{\sigma_i}},
\end{eqnarray*}
where the second-to-last equality uses the standard relation $(\sum_j\langle e_j|n_i\rangle x_j-d_i)\delta_{\sigma_i}=0$; and we have $(iii)$. The claim $(iv)$ is of course obvious, it follows by definition that $\supp(\overline{\delta_\sigma})= \pi(\supp(\delta_\sigma))=\pi(H_\sigma)=H_{\pi(\sigma)}$. 
\end{proof}

The corresponding result for general cells not of top dimension follows from an application of Kashiwara's Theorem.

\begin{Defn}\label{Defn:pi-skel}
  Let $F_{\le i}^\pi:=D_X\cdot\{\delta_\sigma|\sigma\sse K,\dim(\pi(\sigma))\le i\}$. These submodules form a filtration on $M_K$, which we call the \emph{$\pi$-skeleton filtration}. We denote the filtration quotients by $Q_i^\pi$.
  Let $F_{\le i}^{\pi'}:=D_Y\cdot\{\overline{\delta_\sigma}|\sigma\sse K,\dim(\pi(\sigma))\le i\}$. These are submodules of $\pi_+^0M_K$, and form a filtration which we call the \emph{skeleton filtration on $\pi_+^0M_K$}. We denote the filtration quotients by $Q_i^{\pi'}$.
\end{Defn}

\begin{Defn}
  Let $K_i,K_{\le i}$ denote the subcomplexes of $K$ given by respectively $K_i:=\bigcup_{\dim(\pi(\sigma))= i}\sigma$ and $K_{\le i}:=\bigcup_{\dim(\pi(\sigma))\le i}\sigma$. Note that $K_{\le i}$ is closed in $K$, and so clearly $F_{\le i}^\pi\simeq M_{K_\le i}$.
\end{Defn}

\begin{Prop}\label{Prop:pi-skel-dir-im}
  $F_{\le i}^{\pi'}\simeq \pi_+^0F_{\le i}^\pi$, and $Q_{i}^{\pi'}\simeq \pi_+^0Q_{i}^\pi$.
\end{Prop}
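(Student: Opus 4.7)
The plan is to first establish the isomorphism $F_{\le i}^{\pi'} \simeq \pi_+^0 F_{\le i}^\pi$ via the natural map induced by the inclusion $F_{\le i}^\pi \hookrightarrow M_K$, and then to deduce $Q_i^{\pi'} \simeq \pi_+^0 Q_i^\pi$ as a formal consequence of the right-exactness of $\pi_+^0$ (which follows from \eqref{eqn:proj-quot} expressing it as a cokernel).

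For the first claim, denote by $f_i \colon \pi_+^0 F_{\le i}^\pi \to \pi_+^0 M_K$ the induced $D_Y$-module map. The image of $f_i$ is a $D_Y$-submodule of $\pi_+^0 M_K$ containing every generator $\overline{\delta_\sigma}$ with $\sigma \in K_{\le i}$, so it contains $F_{\le i}^{\pi'}$. For the reverse inclusion, a general element is represented by $P\delta_\sigma$ with $P \in D_X$ and $\sigma \in K_{\le i}$; putting $P$ in Weyl normal form so that all $\dd_{z_j}$ are collected on the right, every monomial containing a positive $\dd_z$-power is annihilated in $\pi_+^0 F_{\le i}^\pi$, and the question reduces to showing that $\overline{z^\gamma \delta_\sigma} \in F_{\le i}^{\pi'}$ for all $\gamma \in \mathbb{N}^{m-s}$. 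I handle this by induction on $|\gamma|$, using Proposition \ref{Prop:pi_+^0-rels}$(iii)$ (extended from top-dimensional cells to arbitrary $\sigma$ by Kashiwara's theorem applied to $H_\sigma \hookrightarrow X$) to rewrite $\overline{z^\gamma \delta_\sigma}$ as a $D_Y$-combination of terms $\overline{z^{\gamma'}\delta_{\sigma_i}}$ with either $|\gamma'|<|\gamma|$ or $\sigma_i$ a facet of $\sigma$ still in $K_{\le i}$.

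The main obstacle is injectivity of $f_i$, which is equivalent to the identity $F_{\le i}^\pi \cap \sum_j \dd_{z_j} M_K = \sum_j \dd_{z_j} F_{\le i}^\pi$. I would approach this via the long exact sequence of direct images attached to $0 \to F_{\le i}^\pi \to M_K \to M_K/F_{\le i}^\pi \to 0$, in which the kernel of $f_i$ is identified with the image of the connecting map $\pi_+^{-1}(M_K/F_{\le i}^\pi) \to \pi_+^0 F_{\le i}^\pi$. Proposition \ref{Prop:filtr-quots} furnishes a composition series for $M_K/F_{\le i}^\pi$ whose subquotients are simple modules $i_+^0\CC[H_\sigma]$ with $\dim\pi(\sigma)>i$; for each such summand, the relative de Rham cohomology of the affine projection $H_\sigma \to H_{\pi(\sigma)}$ is concentrated in the single degree $-v(\sigma)$, so only cells with $v(\sigma)=1$ can possibly contribute to $\pi_+^{-1}$. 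A direct argument, tracing the connecting map in terms of the standard relations and using that the elements these cells produce in $\pi_+^0 F_{\le i}^\pi$ already lie in $\sum\dd_{z_j}F_{\le i}^\pi$, then shows the composite vanishes.

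Once $F_{\le i}^{\pi'} \cong \pi_+^0 F_{\le i}^\pi$ is established for all $i$, the quotient statement follows by applying $\pi_+^0$ to $0 \to F_{\le i-1}^\pi \to F_{\le i}^\pi \to Q_i^\pi \to 0$: right-exactness gives $\pi_+^0 F_{\le i-1}^\pi \to \pi_+^0 F_{\le i}^\pi \to \pi_+^0 Q_i^\pi \to 0$, and under the established isomorphisms the first map is identified with the injection $F_{\le i-1}^{\pi'} \hookrightarrow F_{\le i}^{\pi'}$. Taking cokernels yields $\pi_+^0 Q_i^\pi \cong F_{\le i}^{\pi'}/F_{\le i-1}^{\pi'} = Q_i^{\pi'}$.
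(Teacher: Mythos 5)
Your overall architecture is the same as the paper's: identify $F_{\le i}^{\pi'}$ with the image of the natural map $\widehat{i}\colon\pi_+^0F_{\le i}^\pi\to\pi_+^0M_K$, reduce everything to injectivity of $\widehat{i}$, and deduce the statement about $Q_i^{\pi}$ formally. Your treatment of the image identification (reducing $\overline{z^\gamma\delta_\sigma}$ to $D_Y$-combinations of the generators $\overline{\delta_\tau}$) is actually more explicit than the paper, which essentially asserts that $\theta$ is a well-defined surjection; the reduction does work, by summing the relations $\overline{\dd_{z_j}(z_jz^\gamma\delta_\sigma)}=0$ over $j$ and using the defining equations of $H_{\sigma_i}$ to trade vertical coordinates for affine functions of $y$ on the facets (note only that you want the $\dd_{z_j}$ collected on the \emph{left}, since the quotient is by $\sum_j\dd_{z_j}M$; commuting them past $z^\gamma$ produces exactly the lower-degree terms your induction handles). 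Your formal deduction of $Q_i^{\pi'}\simeq\pi_+^0Q_i^\pi$ from right-exactness of $\pi_+^0$ is fine and is what the paper means by ``the second claim follows from the first.'' For injectivity, your route through the long exact sequence of $\pi_+$ applied to $0\to F_{\le i}^\pi\to M_K\to M_K/F_{\le i}^\pi\to 0$ is legitimate and, if anything, more transparent than the paper's one-line invocation of a double-complex spectral sequence; the reduction, via a composition series of $M_K/F_{\le i}^\pi$, to the statement that only cells with $v(\sigma)=1$ and $\dim\pi(\sigma)>i$ can contribute to $h^{-1}$ is also correct.

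The gap is the last sentence of your injectivity argument. ``A direct argument\dots then shows the composite vanishes'' is precisely the assertion $F_{\le i}^\pi\cap\sum_j\dd_{z_j}M_K=\sum_j\dd_{z_j}F_{\le i}^\pi$, i.e.\ the entire content of the injectivity claim, restated rather than proved. What has to be shown is that every class in $h^{-1}\bigl(DR_{X/Y}(M_K/F_{\le i}^\pi)\bigr)$ is represented by a relative cocycle $\eta\ten m$ whose lift to $M_K$ satisfies $d_Z(\eta\ten\widetilde m)\in\sum_j\dd_{z_j}F_{\le i}^\pi$, not merely $d_Z(\eta\ten\widetilde m)\in\Omega^{m-s}_{X/Y}\ten F_{\le i}^\pi$. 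This is not automatic: a priori, for a $v(\sigma)=1$ cell the term $\sum_j\langle w|n_j\rangle\delta_{\sigma_j}$ (with $w$ the fiber direction of $H_\sigma$) could be a nonzero element of $F_{\le i}^\pi$ that is not a $\dd_z$-boundary there. The saving fact is a dimension count: a non-vertical facet $\sigma_j$ of such a $\sigma$ has $v(\sigma_j)=0$ and $\dim\pi(\sigma_j)=\dim\pi(\sigma)>i$, so $\sigma_j\notin K_{\le i}$ and (by comparing supports) its contribution cannot land in $F_{\le i}^\pi$ at all; hence the cocycle condition forces the lifted representative to be an honest $d_Z$-cocycle, and the connecting map vanishes. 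Some argument of this kind must be supplied before the proof is complete; as written, the decisive step is only asserted.
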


\begin{proof}
The second claim follows from the first. There is a surjective map $\theta:\pi_+^0F_{\le k}^\pi\to F_{\le k}^{\pi'}$ given by $\overline{\delta_\sigma}\mapsto \overline{\delta_\sigma}$ sending the class of $\delta_\sigma$ in $\pi_+^0F_{\le k}^\pi$ to the class of $\delta_\sigma$ in $\pi_+^0M_K$. If we let $\iota:F_{\le k}^{\pi'}\inj \pi_+^0M_K$ and $i:F_{\le k}^\pi\inj M_K$ be the inclusions of submodules, and $\widehat{i}:\pi_+^0F_{\le k}^\pi\to \pi_+^0M_K$ the map induced by $i$ in the direct image, we have $\widehat{i}=\iota\circ\theta$, so if we can show that $\widehat{i}$ is injective, $\theta$ is an isomorphism.

Consider now the diagram 
$$\xymatrix{F_{\le k}^\pi\ar@{^{(}->}[d]_i\ar@{>>}[r]^{q_k} & \pi_+^0F_{\le k}^\pi\ar[d]_{\widehat{i}} \\M_K\ar@{>>}[r]^q\ar@{>>}[d] & \pi_+^0M_K\ar@{>>}[d] \\ \cok(i)\ar@{>>}[r]^{\overline{q}} & \cok(\widehat{i}) }$$
where $q: M_K\surj M_K/\sum \dd_zM_K\simeq \pi_+^0M_K$ (and similar for $q_k$) is the quotient map from (\ref{eqn:proj-quot}), and $\overline{q}:\cok(i)\to\cok(\widehat{i})$ is the map induced by $q$. Considering this diagram as a double complex, the associated spectral sequence gives us that $\ker(\widehat{i})=0$ and we are done.\end{proof}

\begin{Thm}\label{Thm:rel-de-Rham}The cohomology modules $h^i(DR_{X/Y}Q_k^\pi)$ of $DR_{X/Y}Q_k^\pi$ are semisimple $D_Y$-modules, with summands isomorphic to $D_Y\cdot\overline{\delta_{\pi(\sigma)}}$ for $\sigma\in K_k$. The number of such summands for $h^i(DR_{X/Y}Q_k^\pi)$ is equal to $\dim H_{i+k}^{BM}(K_k,\CC)$.
\end{Thm}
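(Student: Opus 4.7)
The plan is to adapt the argument of Theorem \ref{Thm:deRham}, replacing the skeleton filtration of $M_K$ by a filtration internal to $Q_k^\pi$, and the absolute de Rham functor by $DR_{X/Y}$. First I would introduce the internal filtration $F^vQ_k^\pi := D_X\cdot\{\overline{\delta_\sigma} : \sigma\in K_k,\ v(\sigma)\le v\}$, indexed by the fiber dimension. Repeating the argument of Proposition \ref{Prop:filtr-quots} verbatim shows that the graded pieces $gr^vQ_k^\pi$ are semisimple, with simple summands $D_X\cdot\overline{\delta_\sigma}\simeq i_+^0\CC[H_\sigma]$, one for each $\sigma\in K_k$ with $v(\sigma)=v$. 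The key point is that a facet $\sigma_j$ of such a $\sigma$ either lies in $K_k$ (and hence in $F^{v-1}Q_k^\pi$) or has $\dim\pi(\sigma_j)<k$ and is already zero in $Q_k^\pi$, so the standard relations reduce to zero modulo $F^{v-1}Q_k^\pi$.

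Next, for a single simple summand $i_+^0\CC[H_\sigma]$ with $v(\sigma)=v$, the restriction $\pi|_{H_\sigma}\colon H_\sigma\to H_{\pi(\sigma)}$ is a trivial $\mathbb{A}^v$-bundle. Using the coordinate model of Remark \ref{Rem:tool}, the relative de Rham complex factors as the tensor product of the de Rham complex of the fiber $\mathbb{A}^v$ and a Koszul complex for the free generators $\dd_{z_{v+1}},\dots,\dd_{z_{m-s}}$; each factor has cohomology concentrated in one degree, so the total cohomology is concentrated in one degree, where it equals the simple $D_Y$-module $i_{\pi(\sigma),+}^0\CC[H_{\pi(\sigma)}]\simeq D_Y\cdot\overline{\delta_{\pi(\sigma)}}$.

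The filtration on $Q_k^\pi$ then induces a spectral sequence converging to $h^*(DR_{X/Y}Q_k^\pi)$, whose $E_1$ page is concentrated in a single row indexed by $v$, with the $v$-th entry given by $\bigoplus_{\sigma\in K_k,\ v(\sigma)=v}D_Y\cdot\overline{\delta_{\pi(\sigma)}}$. The $d_1$ differential is computed exactly as in Theorem \ref{Thm:deRham}: on a normalized generator $\omega_\sigma\otimes\overline{\delta_\sigma}$, $d_1$ returns a signed sum over facets $\sigma_j$ of $\sigma$; facets with $\dim\pi(\sigma_j)<k$ vanish in $Q_k^\pi$, leaving precisely the Borel--Moore boundary within $K_k$, so $d_1$ is the BM chain complex of $K_k$ with coefficients extended by the corresponding simples. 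Higher differentials vanish by degree reasons, so $E_\infty=E_2$; since the entries are direct sums of simple $D_Y$-modules (whose endomorphism ring is $\CC$), the cohomology remains semisimple, with multiplicities given by the BM homology dimensions of $K_k$.

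I expect the principal obstacle to be the bookkeeping of conventions --- the $[m-s]$ shift in $DR_{X/Y}$, the indexing direction of the internal filtration, and the precise interpretation of $K_k$ --- so that the multiplicity appearing in $h^i$ matches the stated $\dim H_{i+k}^{BM}(K_k,\CC)$ exactly. Verifying that the $d_1$-complex coincides on the nose with $C_\bullet^{BM}(K_k,\CC)$, rather than with some relative or truncated variant, is the technical heart of the argument.
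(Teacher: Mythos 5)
Your proposal follows essentially the same route as the paper's proof: a skeleton-type filtration on $Q_k^\pi$ (filtering by $v(\sigma)$ is the same as filtering by $\dim(\sigma)$ for cells of $K_k$, since $\dim(\sigma)=k+v(\sigma)$ there), the factorization of $DR_{X/Y}$ on each simple summand into a $Y$-part and a fiberwise complex concentrated in a single degree, the collapse of the associated spectral sequence to a single row at $E_1$, and the identification of $d_1$ with the Borel--Moore boundary of $K_k$ after noting that facets with $\dim\pi(\sigma_j)<k$ die in $Q_k^\pi$. The argument is correct and matches the paper's in all essentials, including the semisimplicity conclusion.
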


\begin{proof}We recall our convention that $X=Y\times Z$, with $\pi$ the projection on $Y$. We will (begin to) compute the relative de Rham complex by means of a skeleton filtration (as in \ref{Defn:skel-filt} with the obvious alterations) on $Q_k^\pi$. We can express each skeleton filtration quotient summand $D_X\cdot \overline{\delta_\sigma}$ as a module $\CC[y_1,\ldots,\dd_{y_s},z_1,\ldots,\dd_{z_{m-s}}]$ by choosing suitable coordinates, in the following manner. We choose the $y_i$ such that
$$D_Y\cdot \overline{\delta_{\pi(\sigma)}}\simeq \CC[y_1,\ldots,y_{\dim(\pi(\sigma))},\dd_{y_{\dim(\pi(\sigma))+1}},\ldots,\dd_{y_s}]$$ 
in the same way as in \ref{Rem:tool}. Similarly, we choose the $z_j$ such that for a generic fiber $F:=\pi^{-1}(p)\cap H_\sigma$ (where $p\in int(\pi(\sigma))$ is some point), we have, also as in \ref{Rem:tool}, that 
$$D_Z\cdot\overline{\delta_F}\simeq\CC[z_1,\ldots,z_{v(\sigma)},\dd_{z_{v(\sigma)-1}},\ldots,\dd_{z_{m-s}}].$$ 
This isomorphism of course depends on which point $p$ we choose, but the coordinates do not. In particular we have $\dd_{z_j}\overline{\delta_\sigma}=0$ for $j\le v(\sigma)$. 

The relative de Rham complex $DR_{X/Y}(D_X\cdot\overline{\delta_\sigma})$ is now of the form 
$$\Omega_{X/Y}^\bullet\ten \CC[y_1,\ldots,y_{\dim(\pi(\sigma))},\dd_{y_{\dim(\pi(\sigma))+1}},\ldots,\dd_{y_s},z_1,\ldots,z_{v(\sigma)},\dd_{z_{v(\sigma)-1}},\ldots,\dd_{z_{m-s}}],$$ 
and since the differential $d_Z=\sum_jdz_j\ten \dd_{z_j}$ commutes with the $Y$ variables, this becomes 
$$\CC[y_1,\ldots,\dd_{y_s}]\ten_{\CC}\left(\Omega_{Z}^\bullet\ten_{\CC[Z]} \CC[z_1,\ldots,\dd_{z_{m-s}}]\right).$$ 
As in \ref{Thm:deRham} we have 
$$\Omega_Z^\bullet\ten \CC[z_1,\ldots,z_{v(\sigma)},\dd_{z_{v(\sigma)-1}},\ldots,\dd_{z_{m-s}}]\simeq C_\bullet^{BM}(\pi^{-1}(p)\cap \mathring{\sigma}).$$ 
The cohomology is now computed via the spectral sequence associated to the skeleton filtration on $Q_k^\pi$, which begins with $E_0^{pq}=\Omega_{X/Y}^{m-s+p+q}\ten Q_{-p}$, and collapses on the $E_1$ page to a single row 
$$\bigoplus_{v(\sigma)=m-s-k} D_Y\cdot\overline{\delta_{\pi(\sigma)}}\to \cdots \to \bigoplus_{v(\sigma)=0} D_Y\cdot\omega_\sigma\ten\overline{\delta_{\pi(\sigma)}}$$ (we let $\omega_\sigma=dz_{1}\land\cdots\land dz_{v(\sigma)}$, in the coordinates suiting each $\sigma$ as above).

We can now show that the cohomology modules must be direct sums of simple modules: as the differential $d_Z$ commutes with $D_Y$, its action is determined by the action on the generators $\omega_{\pi(\sigma)}\overline{\delta_{\pi(\sigma)}}$, and so taking cohomology only involves identification of generators. This implies that the cohomology modules are of the form $\sum D_Y\cdot\omega_{\pi(\sigma)}\ten \overline{\delta_{\pi(\sigma)}}$, and one gets a (non-canonical) direct sum decomposition by choosing some generating set. We recall from \ref{Cor:simple} that each summand $D_Y\cdot \omega_{\pi(\sigma)}\ten\overline{\delta_{\pi(\sigma)}}$ is simple.

We want to relate this to the homology of $K_k$. We recall from \ref{Thm:deRham} that the de Rham differential $d_X$ corresponds to the topological boundary map, because for a generator $\omega_\sigma\ten\delta_\sigma$ we had $$d(\omega_\sigma\ten\delta_\sigma) = -\sum_{\sigma_i \sse \dd\sigma}\omega_{\sigma_i}\ten\delta_{\sigma_i}.$$ Now, in the relative de Rham complex we have the relative differential $d_Z$ acting on generators $\omega_\sigma\ten\delta_\sigma$, and this also behaves like the topological boundary map, the same computation as in \ref{Thm:deRham} works, and we get 
$$d_Z(\omega_\sigma\ten\delta_\sigma)=-\sum_{\sigma_i\sse \dd\sigma}\omega_{\sigma_i}\ten \delta_{\sigma_i}$$
We see that the correspondence of the de Rham differential to the topological boundary map holds, except for one subtlety: those cells $\sigma_i\sse\dd\sigma$ such that $H_{\sigma_i}$ is contained in a translate of $\ker(\pi)$ do not appear in the final sum. These are precisely those cells in the boundary of $\sigma$ that have image of dimension strictly lower than $\dim(\pi(\sigma))$. Thus, if we restrict our attention to the subcomplex $K_k$, where these cells are removed, the correspondence to the topological boundary map remains. Just as we had $d[\sigma]=d(\omega_\sigma\ten\delta_\sigma)=-\sum \omega_{\sigma_i}\ten\delta_{\sigma_i}=[\dd\sigma]$, we have now $d_Z(\omega_\sigma\ten\delta_\sigma)=-\sum \omega_{\sigma_i}\ten\delta_{\sigma_i}$, the only difference is instead of constant coefficients we now have $D_Y$-coefficients.

We observe that the cells in $K_k$ all have $\dim(\sigma)=k+v(\sigma)$, and accordingly the generators of $h^i(DR_{X/Y}Q_k^\pi)$ correspond to cells with $\dim(\sigma)=k+i$. This gives us that the number of summands in $h^i(DR_{X/Y}Q_k^\pi)$ is equal to the dimension of the homology group $H_{k+i}^{BM}(K_k,\CC)$, and one can choose as generators any set of $\overline{dz_{J_{\sigma}}\ten\delta_\sigma}$'s such that the associated homology classes $[\sigma]$ generate $H_{k+i}^{BM}(K_k,\CC)$.
\end{proof}

\begin{Rem}
  Using \ref{Thm:rel-de-Rham} we can compute the skeleton filtration quotients of each level of the direct image $\pi_+M_K$, by running the appropriate spectral sequence. To recover $\pi_+M_K$ from the filtration quotients, it is enough to find the extension with the correct de Rham cohomology, as each isomorphism class of extensions has different cohomology.
\end{Rem}

In the same way as \ref{Prop:presentation} we can show the following:

\begin{Prop}\label{Prop:presentation-dir-im}
  There is a canonical presentation $$(D_Y)^r\to (D_Y)^c\surj \pi_+^0M_K$$ where $c$ is the number of cells in $K$, and $r$ is equal to $(\dim(Y)+1)\cdot c+\sum_{\sigma\sse K}(v(\sigma)-\delta_{0,v(\sigma)})$ (here, $\delta_{0,v(\sigma)}$ is the Kronecker delta function). 
\end{Prop}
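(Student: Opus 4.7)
The plan is to mimic the proof of Proposition \ref{Prop:presentation}, using the skeleton filtration to decouple the presentation cell-by-cell on the associated graded. Label the free module $(D_Y)^c$ by cells, with $g_\sigma \mapsto \overline{\delta_\sigma}$; this map is surjective by construction of $\pi_+^0M_K$.

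For each cell $\sigma$ with $d := \dim(\pi(\sigma))$ and $v := v(\sigma)$, enumerate the standard relations from Proposition \ref{Prop:pi_+^0-rels} (extended to non-top-dimensional cells via Kashiwara's theorem applied to $H_\sigma \inj X$): $d$ of type $(i)$, $v$ of type $(ii)$, one of type $(iii)$ when $v > 0$, and $s - d$ of type $(iv)$. Per cell this totals $(s+1) + v - \delta_{0,v}$, which sums over all cells to the stated $r = (\dim(Y)+1)c + \sum_\sigma (v(\sigma) - \delta_{0,v(\sigma)})$. Take $(D_Y)^r$ free on these relations with the obvious map to $(D_Y)^c$; the composition to $\pi_+^0 M_K$ is zero by Proposition \ref{Prop:pi_+^0-rels}.

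To prove exactness in the middle, filter all three modules by the skeleton filtration of Definition \ref{Defn:skel-filt} (indexed by $\dim(\sigma)$). In the associated graded, all facet terms in the standard relations vanish (facets lying in strictly lower skeleton level), so the complex decouples into a direct sum of per-cell blocks of the form $(D_Y)^{r_\sigma}\to D_Y\cdot g_\sigma\to (\mathrm{gr}\,\pi_+^0M_K)_\sigma$. For cells with $v(\sigma) = 0$, the surviving relations (types $(i)$ and $(iv)$) present the simple module $D_Y\cdot\overline{\delta_{\pi(\sigma)}}$ by the same Weyl-algebra computation as in Proposition \ref{Prop:presentation} applied inside $Y$ to the cell $\pi(\sigma)$. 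For cells with $v(\sigma) > 0$, the type $(iii)$ relation reduces on the graded to $v(\sigma) \cdot g_\sigma = 0$, collapsing the generator; this matches the graded of $\pi_+^0 M_K$, where $\overline{\delta_\sigma}$ already vanishes because $v(\sigma)\overline{\delta_\sigma}$ equals a sum of facet terms at strictly lower skeleton level.

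The main obstacle is verifying that only cells with $v(\sigma) = 0$ contribute to the graded of $\pi_+^0 M_K$, which follows by descending induction on $v$ using relation $(iii)$. Once that is in place, the per-cell exactness reduces to computations already handled in Proposition \ref{Prop:presentation}, and the only genuinely new ingredient is the collapsing behaviour driven by $(iii)$ when $v(\sigma) > 0$.
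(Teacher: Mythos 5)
Your enumeration of the generating relations and the resulting count $r=(\dim(Y)+1)c+\sum_\sigma(v(\sigma)-\delta_{0,v(\sigma)})$ is correct, and imitating the filtered argument of Proposition \ref{Prop:presentation} is the right instinct (the paper itself only says the proof goes ``in the same way''). However, the exactness step has a genuine gap: the complex does \emph{not} decouple into per-cell blocks under the filtration by $\dim(\sigma)$. The relations of type $(ii)$ attached to a cell $\sigma$ involve \emph{only} the facets of $\sigma$, so in the associated graded they map to zero rather than to a multiple of $g_\sigma$; they therefore contribute nothing to any block. Yet these are precisely the relations that identify generators one filtration level down, so $\mathrm{gr}_i(\pi_+^0M_K)$ is in general strictly smaller than $\bigoplus_{v(\sigma)=0,\ \dim\sigma=i}D_Y\cdot\overline{\delta_{\pi(\sigma)}}$, and the graded complex fails to be exact in the middle. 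Your check that cells with $v(\sigma)>0$ die in the graded is fine, but you never verify the other half of the decoupling claim, namely that each $v(\sigma)=0$ cell contributes one full simple summand and that the sum is direct --- and that is exactly what fails.

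A minimal counterexample: let $K=\widehat{[0,1]}\subset\RR^1$ and let $\pi:\RR^1\to\RR^0$, so $D_Y=\CC$. The type $(ii)$ relation of the interval gives $\overline{\delta_0}=\overline{\delta_1}$, and $\pi_+^0M_K\simeq\CC$; in particular $\mathrm{gr}_0(\pi_+^0M_K)\simeq\CC$. But the two vertices have $d=0$, $s=0$, $v=0$, hence carry no relations of their own, so your level-$0$ block is $0\to\CC g_0\oplus\CC g_1\to \mathrm{gr}_0(\pi_+^0M_K)$, which has one-dimensional homology in the middle. (The full presentation $\CC^2\to\CC^3\to\CC\to 0$ \emph{is} exact --- the proposition is fine --- but this cannot be deduced from exactness of your graded complex, since that exactness is false.) The paper's own description of the graded pieces of $\pi_+^0M_K$ (Proposition \ref{Prop:pi-skel-dir-im} combined with Theorem \ref{Thm:rel-de-Rham}) shows that the number of simple summands is governed by Borel--Moore homology of the strata $K_k$, not by a per-cell count, which already signals that no filtration will decouple the presentation cell by cell. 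Any repair must track the type $(ii)$ relations acting across filtration levels --- for example by applying the right-exact functor $\pi_+^0$ to the presentation of Proposition \ref{Prop:presentation} and then eliminating the generators made redundant by relations $(ii)$ and $(iii)$, or by comparing the cokernel of the graded map against the quotients computed in Theorem \ref{Thm:rel-de-Rham}.
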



\subsection{Distributional direct images and $B$-splines}\label{subsec:spline}

\begin{Defn}
  The \emph{distributional direct image} of $\delta_\sigma$ is defined by
\[\pi_*\delta_\sigma := [\phi\mapsto \int_\sigma \phi\circ \pi\; dx]\]
for a test function $\phi$ on $\RR^s$. This is the distribution form of the well-known \emph{multivariate $B$-spline}
$$\sigma_\pi(x)=\frac{1}{\sqrt{\det(\pi\cdot\pi^t)}}\vol(\pi^{-1}(x)\cap \sigma),$$ 
(where we by abuse of notation write $\pi$ for the matrix associated to $\pi$). Using this we can also express $\pi_*\delta_\sigma$ as the distribution
$$\phi\mapsto\int_{\RR^s}\phi(x)\sigma_\pi(x)dx$$
(see e.g. \cite[chapter 7]{DeConcini-Procesi} for details).
\end{Defn}

De Concini and Procesi in \cite{DeConcini-Procesi} investigate some of the properties of the module $D_Y\cdot\pi_*\delta_K$, when $\pi$ is a projection, in the special cases when $K$ is a box or a cone. In light of what we have done so far, we might say that for general $K$, the module generated by all the $\pi_*\delta_{\sigma}$ is the more natural object, so let us investigate it closer. 

\begin{Defn}We let $S_K:=W\cdot\{\pi_*\delta_\sigma|\sigma\subset K\}$.
\end{Defn}

There are similar standard relations as for $\pi_+^0M_K$:

\begin{Thm}[De Boor - Höllig, \cite{DB-H}]\label{Thm:DB-H}  Let $\sigma$ be a polyhedral body in $\RR^m$, with facets $\sigma_i$, and corresponding outward unit normals $n_i$, and let $\pi$ be the projection on the first $s$ coordinates. Assume also that the fibers $\pi^{-1}(x)\cap K$ are compact. Then the following hold:

\begin{enumerate}
\item[(i)]$\dd_{\pi(z)}\pi_*\delta_\sigma = -\sum_i\langle \pi(z) | n_i\rangle \pi_*\delta_{\sigma_i}$, for any $z\in \RR^m$,
\item[(ii)]$\sum_i\langle v|n_i\rangle\pi_*\delta_{\sigma_i}=0$, for $v\in \RR^m$ orthogonal to $\RR^s$, and
\item[(iii)]$v(\sigma)\pi_*\delta_\sigma = \sum_i\langle k_i-x|n_i\rangle \pi_*\delta_{\sigma_i}$, where $k_i$ is an arbitrary point of $\sigma_i$ and $x\in \RR^s$.
\end{enumerate}
\end{Thm}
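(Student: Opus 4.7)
The plan is to reproduce the proof of Proposition~\ref{Prop:pi_+^0-rels} one level up, at the level of distributions rather than classes in $\pi_+^0 M_K$. By definition $\pi_*\delta_\sigma$ pairs with a test function $\phi$ on $\RR^s$ by integrating the pullback $\phi\circ\pi$ over the $m$-dimensional polytope $\sigma$, so each of (i)--(iii) should follow from Stokes' theorem applied to $\sigma$ together with the elementary fact that $\phi\circ\pi$ is constant along every direction in $\ker\pi$. The fiber-compactness hypothesis plays the role of guaranteeing that $\pi^{-1}(\supp\phi)\cap\sigma$ is compact, so that all integrals in sight converge and Stokes produces no boundary contribution at infinity; after that caveat, everything is essentially bookkeeping.

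For (i), I lift $\pi(z)\in\RR^s$ to $\widetilde w:=(\pi(z),0,\ldots,0)\in\RR^m$. Because $\phi\circ\pi$ is invariant under $\ker\pi$-translations, $\dd_{\widetilde w}(\phi\circ\pi)=(\dd_{\pi(z)}\phi)\circ\pi$, and Stokes' theorem on $\sigma$ (as in \ref{Prop:relations}(i)) gives
\begin{equation*}
(\dd_{\pi(z)}\pi_*\delta_\sigma)(\phi)=-\int_\sigma\dd_{\widetilde w}(\phi\circ\pi)\,dx=-\sum_i\langle\widetilde w|n_i\rangle\int_{\sigma_i}\phi\circ\pi\,dx=-\sum_i\langle\pi(z)|n_i\rangle\pi_*\delta_{\sigma_i}(\phi),
\end{equation*}
which is (i). For (ii), the same computation with $v\in\ker\pi$ yields $\dd_v(\phi\circ\pi)\equiv 0$, so $0=\int_\sigma\dd_v(\phi\circ\pi)\,dx=\sum_i\langle v|n_i\rangle\pi_*\delta_{\sigma_i}(\phi)$.

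For (iii) I import the divergence trick from the proof of \ref{Prop:pi_+^0-rels}(iii). Because $\dd_j(\phi\circ\pi)=0$ whenever $j>s$, we have $\phi\circ\pi=\dd_j(x_j\cdot\phi\circ\pi)$ for each such $j$. Summing over the $v(\sigma)=m-s$ indices $j>s$ and applying Stokes gives
\begin{equation*}
v(\sigma)\pi_*\delta_\sigma(\phi)=\sum_i\int_{\sigma_i}\Bigl(\sum_{j>s}\langle e_j|n_i\rangle x_j\Bigr)(\phi\circ\pi)\,dx.
\end{equation*}
On $\sigma_i$ the defining equation $\langle x|n_i\rangle=\langle k_i|n_i\rangle$ lets me rewrite the bracket as $\langle k_i|n_i\rangle-\sum_{j\le s}\langle e_j|n_i\rangle x_j$. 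The constant $\langle k_i|n_i\rangle$ pulls out, and the $j\le s$ sum converts to a $Y$-coordinate via $x_j=\pi(x)_j$: integrating against $\phi\circ\pi$ turns multiplication by $x_j$ on $\sigma_i$ into multiplication by the linear function $y_j$ on $\pi_*\delta_{\sigma_i}$. Collecting terms yields the coefficient $\langle k_i-y|n_i\rangle$, where $y\in\RR^s$ is viewed in $\RR^m$ by zero-padding; independence from the choice of $k_i\in\sigma_i$ is immediate from $n_i\perp H_{\sigma_i}$.

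The only real obstacle is justifying Stokes' theorem on the possibly unbounded $\sigma$. This is exactly where the fiber-compactness hypothesis enters: it reduces the effective domain of integration to the compact set $\sigma\cap\pi^{-1}(\supp\phi)$, after which Stokes may be applied on a bounded Lipschitz domain in the usual way (truncating $\sigma$ outside a large ball if needed, then taking the limit, gives no correction because of the compact support in the fibre directions). All three relations are then visibly the distributional lifts of the module-theoretic standard relations already established in Proposition~\ref{Prop:pi_+^0-rels}.
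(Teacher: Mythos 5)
The paper offers no proof of this statement: it is quoted as a known result and attributed to de Boor and H\"ollig, so there is nothing internal to compare against. Your argument is correct and is essentially the classical derivation (and the distributional counterpart of the paper's own proof of Proposition~\ref{Prop:pi_+^0-rels}): (i) and (ii) are the divergence theorem applied to $\dd_{\widetilde w}(\phi\circ\pi)$ for a lift $\widetilde w$ of $\pi(z)$, respectively for $v\in\ker\pi$ where the integrand vanishes identically; and (iii) is the identity $v(\sigma)\,(\phi\circ\pi)=\sum_{j>s}\dd_j\bigl(x_j\,(\phi\circ\pi)\bigr)$ followed by Stokes and the substitution $\sum_{j>s}\langle e_j|n_i\rangle x_j=\langle k_i|n_i\rangle-\sum_{j\le s}\langle e_j|n_i\rangle x_j$ valid on $H_{\sigma_i}$. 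The one point deserving a sentence more than you give it is the claim that fiber-compactness makes $\sigma\cap\pi^{-1}(\supp\phi)$ compact: for a polyhedral $\sigma$ this holds because an unbounded sequence with bounded image would produce a recession direction of $\sigma$ lying in $\ker\pi$, contradicting compactness of the fibers; once that is said, the truncation argument you sketch closes the proof.
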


\begin{Rem}
  As with the previous case, suitable restrictions of $\pi$ to $H_\sigma$ with appropriate coordinate changes give the corresponding results for general $\sigma$.
\end{Rem}

We observe that the standard relations for $\pi_*\delta_\sigma$ (\ref{Thm:DB-H}) are essentially identical to the standard relations for $\pi_+^0M_K$ (\ref{Prop:pi_+^0-rels}), and we can make the analogous constructions of skeleton filtration and canonical presentation, and achieve the analogous results (we omit tedious repetition of the arguments). 

There is only one difference between the modules $\pi_+^0M_K$ and $S_K$ defined by these standard relations: \ref{Thm:DB-H}$(ii)$ and \ref{Prop:pi_+^0-rels}$(ii)$ both essentially say that $\dd_v\delta_\sigma=0$ for $v\in\ker(\pi)$, which means a certain linear combination of the boundary cells $\delta_{\sigma_i}$ is zero. The important observation is that \ref{Thm:DB-H}$(ii)$ applies even if the $\sigma$ in question is \emph{not} in $K$, while \ref{Prop:pi_+^0-rels}$(ii)$ does not. The reason is obvious: the $\pi_*\delta_{\sigma}$, being concrete distributions, do not care what module they sit in, while the abstract generators $\overline{\delta_\sigma}$ are not so lucky. We thus get extra relations in $\pi_+^0M_K$ whenever 
there are ``missing'' cells.

To formalise this we recall a definition from general topology (see also \cite{Cohen-SimpHtpy} for a further introduction):

\begin{Defn}\label{Defn:elementarily-equivalent}
  We say that a cell $\sigma$ in $K$ has a \emph{free facet} $\tau$ if $\tau$ is a facet of $\sigma$, and is not a facet of any other cell in $K$; we say that $(\sigma,\tau)$ is a \emph{free pair}. If we remove $\sigma$ and $\tau$ from $K$, we obtain another cell complex $L$ which we call an \emph{elementary collapse} of $K$, and $K$ is an \emph{elementary expansion} of $L$. A complex $L$ obtained from $K$ by a sequence of elementary collapses is called a \emph{collapse} of $K$, and we say that $K$ is an \emph{expansion} of $L$. Two complexes related by a sequence of collapses and expansions are said to be \emph{elementarily equivalent}.
\end{Defn}

Let us modify this slightly to suit our purposes:

\begin{Defn}\label{Defn:v-elementarily-equivalent}
  If $(\sigma,\tau)$ is a free pair of $K$ and $v(\sigma)=v(\tau)+1$, we say that $(\sigma,\tau)$ is a \emph{$v(\sigma)$-free pair} of $K$ (with respect to $\pi$). The concepts of $v(\sigma)$-(elementary) collapse and $v(\sigma)$-elementary equivalence are defined analogously (with all involved elementary collapses and extensions being the removal or addition of a $v(\sigma)$-free pair). 
\end{Defn}

\begin{Thm}\label{Thm:mainThm}
  There is a canonical surjective map  $\pi_+^0M_K\to S_K$ given by $\overline{\delta_\sigma}\mapsto \pi_*\delta_\sigma$. If $K$ is 1-elementarily equivalent to a complex $K'$ with connected fibers $\pi^{-1}(x)\cap K'$, then the canonical map is an isomorphism.
\end{Thm}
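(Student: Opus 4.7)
The first claim follows directly from the canonical presentations. Both $\pi_+^0 M_K$ and $S_K$ are quotients of the same free $D_Y$-module on symbols $g_\sigma$ indexed by the cells of $K$, modulo the standard relations of Proposition \ref{Prop:pi_+^0-rels} and Theorem \ref{Thm:DB-H} respectively. Since $\pi_*\delta_\sigma$ satisfies all relations of Proposition \ref{Prop:pi_+^0-rels} --- items (i), (ii), (iii) by Theorem \ref{Thm:DB-H}, and (iv) because $\supp(\pi_*\delta_\sigma)\subseteq H_{\pi(\sigma)}$ --- every relation of $\pi_+^0M_K$ holds in $S_K$, yielding the canonical surjection $\overline{\delta_\sigma}\mapsto \pi_*\delta_\sigma$.

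For the isomorphism claim, the plan is to show (a) both modules are invariant under adding or removing a $1$-free pair, and then (b) that injectivity holds when $K$ itself has connected fibers. For (a), let $(\sigma,\tau)$ be a $1$-free pair, so $v(\sigma)=1$ and $v(\tau)=0$. In the presentation of $\pi_+^0M_K$, relation \ref{Prop:pi_+^0-rels}(iii) for $\sigma$ reads $\overline{\delta_\sigma}=\sum_i(d_i-\sum_{j\le s}\langle e_j|n_i\rangle x_j)\overline{\delta_{\sigma_i}}$, eliminating the generator $g_\sigma$ in favor of the facet generators. Relation \ref{Prop:pi_+^0-rels}(ii) for $\sigma$ gives relations $\sum_i\langle v|n_i\rangle\overline{\delta_{\sigma_i}}=0$ for $v\in\ker\pi$; because $v(\tau)=v(\sigma)-1$, a short linear-algebra check (using that $\pi|_{H_\tau}$ must be injective so $\ker(\pi|_{H_\sigma})$ is not contained in the tangent space of $H_\tau$) shows the normal $n_\tau$ has nonzero component in $\ker\pi$, so this system can be solved for $\overline{\delta_\tau}$, eliminating $g_\tau$. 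Since $\tau$ is free, $g_\tau$ appears in no other standard relation, and after verifying that the remaining derived relations are already implied by the standard relations of the cells of $K\setminus\{\sigma,\tau\}$, the residual presentation is exactly that of $\pi_+^0M_{K\setminus\{\sigma,\tau\}}$. The identical manipulation using Theorem \ref{Thm:DB-H} in place of Proposition \ref{Prop:pi_+^0-rels} establishes the analogous invariance for $S_K$.

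For (b), the kernel of the surjection is generated by ``phantom'' relations coming from Theorem \ref{Thm:DB-H}(ii) applied to polyhedral bodies $\sigma'\notin K$ whose facets all lie in $K$. I would argue that when $\pi^{-1}(x)\cap K$ is connected for generic $x$, every such phantom relation is already a $D_Y$-linear consequence of the standard relations in $\pi_+^0M_K$, essentially by decomposing the phantom body's fibers using actual cells of $K$. The main obstacle is executing this reduction: translating the topological hypothesis into a complete algebraic exhaustion. The natural tool is to equip both $\pi_+^0M_K$ and $S_K$ with the $\pi$-skeleton filtration of Definition \ref{Defn:pi-skel} and compare filtration quotients; on the $\pi_+^0M_K$ side these are described by Theorem \ref{Thm:rel-de-Rham} via the Borel--Moore homology of the $K_k$, and connectedness of fibers is precisely what forces $H_0^{BM}(\pi^{-1}(x)\cap K,\CC)=\CC$ and thereby kills the discrepancy at each filtration level.
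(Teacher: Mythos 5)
Your proposal is correct and follows essentially the same route as the paper: surjectivity because the $\pi_*\delta_\sigma$ satisfy every relation of Proposition~\ref{Prop:pi_+^0-rels}, invariance under $1$-elementary collapse by using relation (iii) to reduce to the $v=0$ generators and relation (ii) for $\sigma$ to eliminate the free facet $\tau$ (your transversality remark that $n_\tau$ has nonzero component along $\ker\pi$ is exactly why that elimination is possible), and connectedness of fibers to handle the rest. The ``main obstacle'' you flag in part (b) is dispatched in the paper by precisely the observation you make --- with connected fibers the $v=0$ cells in each fiber are already joined by $v=1$ cells of $K$, so any type-(ii) relation coming from a phantom cell is a consequence of the existing ones (your $H_0^{BM}(\pi^{-1}(x)\cap K)=\CC$ point) --- stated directly at the level of generators and relations, without routing through the filtration machinery of Theorem~\ref{Thm:rel-de-Rham}.
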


\begin{proof}Surjectivity follows directly from the above observations about the standard relations. It remains to show two things: that 1-elementary collapses do not change the isomorphism class of $\pi_+^0M_K$; and that if $K$ has connected fibers under $\pi$, then $\pi_+^0M_K$ is isomorphic to $S_K$. 

The standard relation \ref{Prop:pi_+^0-rels}$(iii)$ expresses each $\overline{\delta_\sigma}$ with $v(\sigma)> 0$ in terms of those of its facets $\overline{\delta_{\sigma_i}}$ with $v(\sigma_i)<v(\sigma)$ (the coefficients of the remaining facets are zero). By repeated application, this implies that $\pi_+^0M_K$ is generated by those $\overline{\delta_\sigma}$ for which $v(\sigma)=0$, with relations among them determined by cells with $v(\sigma)=1$ (given by \ref{Prop:pi_+^0-rels}$(ii)$) and cells with $v(\sigma)=0$ (given by \ref{Prop:pi_+^0-rels}$(i)$). (The analogous statement for $S_K$ follows in the same manner.) It follows that if we add or remove cells from $K$ to produce another complex $K'$, we get isomorphic direct image modules if the addition or removal of cells preserves these relations. The claim is thus that 1-elementary collapses and expansions preserves the standard relations. 

It suffices to check this for a complex $\widehat{\sigma}$ with $v(\sigma)=1$. The standard relation \ref{Prop:pi_+^0-rels}$(ii)$ essentially says that $\dd_v\delta_\sigma=0$ for any $v\in\ker(\pi)$ parallel to $H_\sigma$, which means a certain linear combination of those boundary cells $\delta_{\tau}$ with $v(\tau)=0$ is zero, and we can thus write any one of them as a sum of the others, which then generate $\pi_+^0M_{\widehat{\sigma}}$. So, in $\pi_+^0M_{\widehat{\sigma}}$, any single one of the generators $\overline{\delta_\tau}$ (with $\tau$ a free facet of $\sigma$) is redundant, and it follows that $\pi_+^0M_{\widehat{\sigma}}\simeq \pi_+^0M_{\widehat{\sigma}\setminus (\sigma,\tau)}$, and further that 1-elementary equivalent cell complexes give isomorphic direct image modules.

For the second claim, if $K$ has connected fibers, all the cells with $v(\tau)=0$ are connected by cells with $v(\sigma)=1$, and so adding any more cells with $v(\sigma)=1$ can not introduce any extra relations between the generators; the `extra' relations in $S_K$ are already there.
\end{proof}

Even when \ref{Thm:mainThm} fails, we can at least express $\pi_+^0M_K$ as an extension, using the geometry of $K$ and $\pi$ to recover the kernel of the map $\pi_+^0M_K\surj S_K$. We illustrate by a simple example:

\begin{Ex}
Let $\pi:\RR^3\to\RR^2$ be the projection $(x,y,z)\mapsto (x,y)$, and let $K$ be the unit box $[0,1]^3$ with the interior and any two `vertical' facets removed. It is easy to see that $K$ is not 1-elementarily equivalent to a complex with connected fibers, as neither of the missing facets form a free pair with the interior of the box, because of the remaining missing facet. 

Letting $top$ and $bottom$ denote the top and bottom facets (in the $z$ direction), we see that the kernel of the map $\pi_+^0M_K\surj S_K$ is generated by $\overline{\delta_{top}}-\overline{\delta_{bottom}}$ (considered as a submodule of $\pi_+^0M_K$). This submodule is isomorphic to the module generated by $\delta_{\pi(K)}$ (and in this case actually isomorphic to $S_K$, though this is not the general case), in other words we have the exact sequence $0\to M_{\pi(K)}\to \pi_+^0M_K\to S_K\to 0$.
\end{Ex}

\section*{Acknowledgements}

I am grateful to my advisor Rikard Bøgvad, for all the usual reasons; I would also like to thank Rolf Källström and Jan-Erik Björk for helpful discussions. 


\end{document}